\theoremstyle{plain}
\newtheorem{thm}{Theorem}
\newtheorem{lem}[thm]{Lemma}
\newtheorem{prop}[thm]{Proposition}
\theoremstyle{definition}
\newtheorem{defn}[thm]{Definition}
\newtheorem{rmk}[thm]{Remark}
\newtheorem{claim}[thm]{Claim}
\numberwithin{thm}{section} \numberwithin{equation}{section}
\newcommand{\ga}[2]{\begin{gather}\label{#1}#2 \end{gather}}
\newcommand{\surj}{\twoheadrightarrow}
\newcommand{\Pic}{{\rm Pic}}
\newcommand{\Spec}{{\rm Spec \,}}
\newcommand{\sD}{{\mathcal D}}
\newcommand{\sE}{{\mathcal E}}
\newcommand{\sH}{{\mathcal H}}
\newcommand{\sL}{{\mathcal L}}
\newcommand{\sN}{{\mathcal N}}
\newcommand{\sO}{{\mathcal O}}
\newcommand{\C}{{\mathbb C}}
\newcommand{\E}{{\mathbb E}}
\newcommand{\F}{{\mathbb F}}
\newcommand{\G}{{\mathbb G}}
\newcommand{\N}{{\mathbb N}}
\newcommand{\Z}{{\mathbb Z}}
\begin{document}

\title{Stratified bundles and \'etale fundamental group}
\author{H{\'e}l{\`e}ne Esnault}
\author{Xiaotao Sun}
\address{Mathematik, Universit{\"a}t Duisburg-Essen, 45117 Essen, Germany}
\email{esnault@uni-due.de}
\address{Academy of Mathematics and Systems Science, Chinese Academy of Science,
Beijing, P. R. of China}
\email{xsun@math.ac.cn}
\date{December 20, 2011}
\thanks{The first author is supported by  the SFB/TR45
and the ERC Advanced Grant 226257. The second author is supported by
NBRPC 2011CB302400, NSFC60821002/F02 and NSFC 10731030}
\begin{abstract} On $X$ projective smooth over an algebraically closed field of
characteristic $p>0$, we show that all irreducible stratified
bundles have rank $1$ if and only if the commutator  $[\pi_1,
\pi_1]$ of the \'etale fundamental group $\pi_1$ is a pro-$p$-group,
and we show that the category of stratified bundles is semi-simple
with irreducible objects of rank $1$ if and only if $ \pi_1 $  is
abelian without $p$-power quotient. This answers positively a
conjecture by Gieseker \cite[p.~8]{Gi}.
\end{abstract}

%\subjclass{Primary Algebraic Geometry}
\maketitle
\begin{quote}

\end{quote}
\section{Introduction}
Let $X$ be a smooth projective variety defined over an
algebraically closed
field $k$ of characteristic $p>0$. In \cite{Gi},
stratified bundles are defined and studied. It is shown that they are a
characteristic $p>0$ analog to complex local systems over smooth complex
algebraic varieties. In particular, Gieseker
shows \cite[Theorem~1.10]{Gi}
\begin{itemize}
\item[(i)] If every stratified bundle is trivial,
then $\pi_1$ is trivial.
\item[(ii)] If all the irreducible stratified bundles have rank $1$,
then
$[\pi_1,\pi_1]$ is a pro-$p$-group.
\item [(iii)] If every stratified bundle is a direct sum of stratified
line bundles, then $\pi_1$ is abelian without non-trivial   $p$-power quotient.
\end{itemize}
Here $\pi_1$ is the \'etale fundamental group based at some
geometric point. He conjectures that in the three statements, the
``\,if\,'' can be replaced by ``\,if and only if\,''. The aim of
this note is to give a positive answer to Gieseker's conjecture (see
Theorem~\ref{thm3.9}).

The converse to (i) is the main theorem of \cite{EM},  and is analog
to Malcev-Grothendieck theorem  (\cite{Mal}, \cite{Gr}) asserting
that if the \'etale fundamental group of a smooth complex projective
variety is trivial, then there is no non-trivial bundle with a flat
connection.

The complex analog to the converse to (iii) relies on the following.

If $X$ is an abelian variety over a field $k$, Mumford
\cite[Section~16]{Mu} showed that a non-trivial line bundle $L$
which is algebraically equivalent to $0$ fulfills $H^i(X, L)=0$ for
all $i\ge 0$.  If $k=\C$, the field of complex numbers, $L$ carries
a unitary flat connection with underlying local system $\ell$, and
$H^1(X, L)=0$ implies that $H^1(X, \ell)=0$. In fact, more
generally, if $X$ is any complex manifold with abelian topological
fundamental group, and $\ell$ is a non-trivial rank $1$ local
system, then $H^1(X, \ell)=0$ (see Remark~\ref{rmk3.11}).

Finally the complex analog to the converse to (ii) and (iii)
together says that if $X$ is a smooth complex variety, then all
irreducible complex local systems on $X$ have rank $1$ if and only
if $\pi_1$ is abelian. However, the semi-simplicity statement has a
different phrasing: all irreducible complex local systems on $X$
have rank $1$ and the category is semi-simple if and only if $\pi_1$
is a finite abelian group (see Claim~\ref{claim3.13} for a precise
statement).

The proof of the converse to (ii) is done in Section 2. It relies on
a consequence of the proof of the main theorem in \cite{EM}, which
is formulated in Theorem~\ref{thm2.3}. It is a replacement for the
finite generation of the topological fundamental group in complex
geometry (see the analog statement in Remark~\ref{rmk3.12}). The
proof of the converse to (iii) is done in Section 3. The key point
to show the converse to (iii) is Theorem~\ref{thm3.4}. Its proof
relies on the construction (Proposition~\ref{prop3.5}) of a moduli
space of non-trivial extensions, with a rational map $f$ induced by
Frobenius pullback, which allows us to
use a theorem of Hrushovski. \\[.2cm]
{\it Acknowledegements:} We thank the department of Mathematics of
Harvard University for its hospitality during the preparation of
this note. We thank Nguy\~{\^e}n Duy T\^an for a very careful
reading of a first version, which allowed us to improve the
redaction. We thank the referee of the first version
who pointed out a mistake, corrected in Section~3. The second named
author would like to thank his colleague Nanhua Xi for discussions
related to Claim~\ref{claim3.3}.

\section{Stratified bundles}

Let $k$ be an algebraically closed field of characteristic $p>0$,
and $X$ a smooth connected projective variety over $k$. A stratified
bundle on $X$ is by definition a coherent $\sO_X$-module $\sE$ with
a homomorphism $$\nabla: \sD_X\to \sE nd_k(\sE)$$ of
$\sO_X$-algebras, where $\sD_X$ is the sheaf of differential
operators acting on the structure sheaf of $X$. By a theorem of Katz
(cf. \cite[Theorem 1.3]{Gi}), it is equivalent to the following
definition. (The terminology is not unique: stratified bundles from
the following definition are called flat bundles  in \cite{Gi},
$F$-divided sheaves in \cite{dS}).
\begin{defn}\label{defn2.1}
A stratified bundle on $X$ is a sequence of bundles $$E=\{E_0, E_1,
E_2, \cdots, \sigma_0, \sigma_1,\ldots \}=\{E_i,\sigma_i\}_{i\in\Bbb
N}$$ where $\sigma_i:F_X^*E_{i+1}\to E_i$  is a $\sO_X$-linear
isomorphism, and  $F_X:X\to X$ is the absolute Frobenius.
\end{defn}
A morphism $\alpha=\{\alpha_i\}:\{E_i,\sigma_i\}\to \{F_i,\tau_i\}$
between two stratified bundles is a sequence of morphisms $\alpha_i:
E_i\to F_i$ of $\sO_X$-modules such that
$$\xymatrix{
   F_X^*E_{i+1} \ar[d]_{\sigma_i} \ar[r]^{ F_X^*\alpha_{i+1}}
                &  \ar[d]^{\tau_i}  F_X^*F_{i+1}\\
  E_i  \ar[r]^{\alpha_i}
                &             F_i }$$
is commutative. The category $\textbf{str}(X)$ of stratified bundles
is abelian, rigid, monoidal. To see it is $k$-linear, it is better
to define stratified bundles and morphisms in the relative version:
objects consist of
$$E'=\{E_0=E'_0, E'_1,
E'_2, \cdots, \sigma'_0, \sigma'_1,\ldots
\}=\{E'_i,\sigma'_i\}_{i\in\Bbb N}$$ where $E'_i$ is a bundle on the
$i$-th Frobenius twist $X^{(i)}$ of $X$, $$\sigma'_i:
F_{i,i+1}^*E_{i+1}'\to E_i'$$ is a $\sO_{X^{(i)}}$-linear
isomorphism, and $F_{i, i+1}: X^{(i)}\to X^{(i+1)}$ is the relative
Frobenius, the morphisms are the obvious ones. A rational point
$a\in X(k)$ yields a fiber functor $\omega_a: \textbf{str}(X) \to
\textbf{vec}_k,\quad E\mapsto a^*E_0$ with values in the category of
finite dimensional vector spaces. Thus $(\textbf{str}(X),\omega_a)$
is a Tannaka category (\cite[Section~2.2]{dS}), and one has an
equivalence of categories
\ga{2.1}{\textbf{str}(X)\xrightarrow{\omega_a \  \cong}\textbf{
rep}_k(\pi^{{\rm str}})} where
$$ \pi^{{\rm str}}={\rm Aut}^{\otimes} (\omega_a)$$
is the Tannaka group scheme, and $\textbf{rep}_k(\pi^{{\rm str}})$
is the category of finite dimensional $k$-representations of
$\pi^{{\rm str}}$. Let $\pi_1:=\pi_1^{{\rm \acute{e}t}}(X, a)$ be
the {\'e}tale fundamental group of $X$. In  \cite[Theorem~1.10]{Gi},
D. Gieseker proved the following theorem.

\begin{thm}\label{thm2.2}(Gieseker): Let $X$ be a smooth
projective variety over an algebraically closed field $k$, then
\begin{itemize}
\item[(i)] If every stratified bundle is trivial,
then $\pi_1$ is trivial.
\item[(ii)] If all the irreducible stratified bundles have rank $1$,
 then $[\pi_1,\pi_1]$ is a pro-$p$-group.
\item [(iii)] If every stratified bundle is a direct sum of stratified
line bundles, then $\pi_1$ is abelian with no $p$-power order
quotient.
\end{itemize}
\end{thm}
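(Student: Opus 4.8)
The plan is to reduce each of the three implications to the modular representation theory of the finite quotients of $\pi_1$, using the Tannakian equivalence \eqref{2.1}. The bridge I would set up first is the following. Every finite quotient $G$ of $\pi_1$ corresponds to a connected finite {\'e}tale Galois cover $h\colon Y\to X$ with group $G$; since $h$ is {\'e}tale it is compatible with the absolute Frobenius, so that any $k$-representation $\rho$ of $G$ descends the $G$-equivariant trivial bundle $\rho\otimes_k\sO_Y$ on $Y$ to a stratified bundle $E_\rho$ on $X$ with $\rank E_\rho=\dim_k\rho$. This produces, for each such $G$, an exact functor $\textbf{rep}_k(G)\to\textbf{str}(X)$ which is fully faithful and whose essential image is closed under subquotients (this is the part of \cite{dS} identifying the maximal pro-{\'e}tale quotient of $\pi^{{\rm str}}$ with $\pi_1$). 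In particular it sends irreducible $G$-modules to irreducible stratified bundles of the same rank, and reflects irreducibility; this correspondence is what I would use in all three parts.

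For (i), I would note that the regular representation $kG$ gives a stratified bundle $E_{kG}$ on which $G$ acts faithfully. If every stratified bundle is trivial then $E_{kG}$ is trivial, forcing $G=1$; as this holds for every finite quotient, $\pi_1$ is trivial. For (ii), assume every irreducible stratified bundle has rank $1$. Then for each finite quotient $G$ every simple $kG$-module is an irreducible stratified bundle, hence one-dimensional, hence factors through $G^{\rm ab}$; therefore $[G,G]$ acts trivially on all simple $kG$-modules. Since the set of group elements acting trivially on every simple module is $G\cap(1+J(kG))=O_p(G)$, the largest normal $p$-subgroup, we get $[G,G]\subseteq O_p(G)$, so $[G,G]$ is a $p$-group. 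Passing to the inverse limit, $\overline{[\pi_1,\pi_1]}=\varprojlim_U[\pi_1/U,\pi_1/U]$ is an inverse limit of finite $p$-groups and hence pro-$p$.

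For (iii), assume every stratified bundle is a direct sum of stratified line bundles, i.e.\ $\textbf{str}(X)$ is semisimple with simple objects of rank $1$. For a finite quotient $G$, each $kG$-module is then a semisimple object of $\textbf{str}(X)$; since $\textbf{rep}_k(G)$ is closed under subobjects, any $kG$-submodule has a $\pi^{{\rm str}}$-stable, hence $G$-stable, complement, so every $kG$-module is semisimple and $kG$ is a semisimple ring. By Maschke's theorem this forces $p\nmid|G|$, and the simple $kG$-modules being of rank $1$ forces $G$ abelian. Thus every finite quotient of $\pi_1$ is abelian of order prime to $p$, so $\pi_1$ is abelian with no non-trivial $p$-power quotient.

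I expect the main obstacle to be the first paragraph, namely constructing the functor $\textbf{rep}_k(G)\to\textbf{str}(X)$ and proving that it is fully faithful with image closed under subobjects, since this is exactly what lets the rank and irreducibility of stratified bundles detect the internal structure of the finite quotients $G$. Once this dictionary is in place, the remaining input is standard finite group theory in characteristic $p$ (the identification $O_p(G)=G\cap(1+J(kG))$ and Maschke's theorem) together with the routine passage to the limit over the finite quotients $G=\pi_1/U$.
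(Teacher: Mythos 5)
Your proposal targets the one statement in this paper that the authors do not prove themselves: Theorem~\ref{thm2.2} is Gieseker's theorem, quoted from \cite[Theorem~1.10]{Gi}, and the paper's own content consists of the converse implications (Theorem~\ref{thm2.6} and Theorem~\ref{thm3.9}). There is therefore no in-paper proof to compare against; judged on its own terms, your argument is correct, and it is essentially the standard route (Gieseker's original one, packaged in the Tannakian language of \cite{dS}). The bridge in your first paragraph --- an exact, fully faithful functor $\textbf{rep}_k(G)\to\textbf{str}(X)$ with essential image closed under subquotients, for each finite quotient $G$ of $\pi_1$ --- is indeed the crux, and your attribution is the right one: it amounts to the surjection $\pi^{\rm str}\surj\pi_1$ of \cite{dS}, after which full faithfulness and closure under subquotients are the general Tannakian facts about quotient maps of affine group schemes. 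Granting it, your three deductions are sound: the regular representation argument for (i); the identification of the elements acting trivially on all simple $kG$-modules with $O_p(G)=G\cap(1+J(kG))$ for (ii), giving $[G,G]\subseteq O_p(G)$; and the transfer of semisimplicity from $\textbf{str}(X)$ to $\textbf{rep}_k(G)$ followed by Maschke for (iii). The limit arguments are also handled correctly ($\overline{[\pi_1,\pi_1]}=\varprojlim_U[\pi_1/U,\pi_1/U]$ is pro-$p$; a non-trivial $p$-power quotient would be a finite quotient of order divisible by $p$).

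Two small points. First, in constructing $E_\rho$ you should acknowledge the Frobenius twist: since $F_X$ is the absolute Frobenius, one has $F_X^*E_\rho\cong E_{\rho^{(p)}}$ rather than $E_\rho$, so the stratification is obtained by setting $E_n:=E_{\rho^{(p^{-n})}}$, which makes sense because $k$ is perfect and $G$ is finite (all untwists exist and have the same image). Second, your group-theoretic input in (ii) --- all simple $kG$-modules one-dimensional implies $[G,G]$ is a $p$-group --- is exactly the converse of the paper's Lemma~\ref{lem2.5} (which the authors prove, via Lemma~\ref{lem2.4}, for the opposite direction of the equivalence); it is standard, but since it is the pivot of your part (ii) it deserves a proof or citation, e.g.\ that the augmentation ideal of $k[N]$ lies in $J(kG)$ for $N$ the kernel of the action on all simples, hence is nilpotent, hence $N$ is a $p$-group.
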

Then Gieseker conjectured that the converse of the above statements
might be true. In \cite{EM}, it is proven that the converse of
statement (i) is true. The aim of this section is to prove the
converse of (ii). We prove the converse of (iii) in Section 3.

The proof relies on the following theorem extracted from \cite{EM},
and which plays a similar r\^ole as the finite generation of the
topological fundamental group $\pi_1^{{\rm top}}$ in complex
geometry (see Remark~\ref{rmk3.12}).

 Fixing an ample line bundle $\sO_X(1)$ on $X$, recall
that $E$ is said to be $\mu$-stable  if for all coherent subsheaves
$U\subset E$ one has $\mu(U)<\mu(E)$, where $\mu(E)$ is the slope
which is defined to be the degree of $E$ with respect to $\sO_X(1)$
divided by the rank of $E$.

\begin{thm}\label{thm2.3} Let $X$ be a smooth
projective variety over an algebraically closed field $k$.  If
there is a stratified bundle $E=(E_n,\sigma_n)_{n\in \Bbb N}$ of
rank $r\ge 2$ on $X$, where $\{E_n\}_{n\in \Bbb N}$ are
$\mu$-stable, then there exists an irreducible
 representation $\rho:\pi_1\to GL(V)$ with
finite monodromy, where $V$ is a $r$-dimensional vector space over
${\bar{\F}_p}$.

\end{thm}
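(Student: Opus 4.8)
The plan is to extract from the proof of the main theorem of \cite{EM} a Frobenius-periodic bundle underlying the stratified bundle $E$, and then to invoke the Lange--Stuhler correspondence to convert it into a finite-monodromy representation. First I would reduce to the case $k=\bar{\F}_p$. By a spreading-out argument, $X$, the polarization $\sO_X(1)$, and the stratified bundle $E$ descend to a smooth projective scheme $X_R\to \Spec R$ over a finitely generated $\F_p$-subalgebra $R\subset k$, together with an $F$-divided sheaf $E_R$ whose components are $\mu$-stable on fibres (stability being an open condition). Specializing to a closed point $s\in\Spec R$ with finite residue field gives a stratified bundle on $X_{\bar s}$ over $\bar{\F}_p$ with $\mu$-stable components. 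Since the specialization homomorphism $\pi_1(X_{\bar\eta})\surj\pi_1(X_{\bar s})$ is surjective and $\pi_1$ of a proper variety is insensitive to extensions of the algebraically closed base field, an irreducible finite-monodromy representation of $\pi_1(X_{\bar s})$ pulls back to one of $\pi_1$ of the same dimension; so it suffices to treat $X/\bar{\F}_p$.

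Second, I would prove that the family $\{E_n\}_{n\in\N}$ is bounded. All $E_n$ have rank $r$, and the isomorphisms $\sigma_n\colon F_X^*E_{n+1}\xrightarrow{\sim}E_n$ give $c_1(E_n)=p\,c_1(E_{n+1})$ and $\Delta(E_n)=p^2\Delta(E_{n+1})$ for the Bogomolov discriminant $\Delta=2rc_2-(r-1)c_1^2$. Hence $\Delta(E_0)=p^{2n}\Delta(E_n)$ for all $n$, while the Bogomolov inequality for $\mu$-stable sheaves gives $\Delta(E_n)\cdot\sO_X(1)^{\dim X-2}\ge 0$; divisibility by arbitrarily high powers of $p$ then forces $\Delta(E_n)\equiv 0$, and $c_1(E_n)$ is constrained similarly. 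By Langer's boundedness theorem the $\mu$-stable sheaves of fixed rank and these bounded invariants form a bounded family, so the classes $[E_n]$ lie in finitely many quasi-projective moduli spaces $M_j$ defined over a common finite field $\F_q\subset\bar{\F}_p$.

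The crux is the third step: producing $n$ and $s\ge 1$ with $(F_X^s)^*E_n\cong E_n$. Here I would identify the assignment $[V]\mapsto[F_X^*V]$ on $\bar{\F}_p$-points with (a twist of) the Frobenius endomorphism of the finite-type $\F_q$-scheme $\bigsqcup_j M_j$, so that the $F$-divided condition becomes a statement about Frobenius orbits. Because each $M_j(\F_{q^m})$ is finite and the relation $E_n\cong F_X^*E_{n+1}$ propagates fields of definition in a controlled way, the set $\{[E_n]\}$ is finite; pigeonhole then yields $[E_n]=[E_{n+s}]$ and, chasing the $\sigma_n$, an isomorphism $(F_X^s)^*E_n\cong E_n$. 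I expect this to be the \emph{main obstacle}: extracting Frobenius $p$-th roots when passing from $E_n$ up to $E_{n+1}$ can a priori enlarge the field of definition, and the reason the orbit stays finite is precisely that $F_X^*$ realizes the geometric Frobenius of the moduli space over $\F_q$ --- this is the point where I would lean most heavily on the finiteness input from \cite{EM}.

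Finally, with $W:=E_n$ satisfying $(F_X^s)^*W\cong W$, the Lange--Stuhler correspondence produces a continuous representation $\rho\colon\pi_1\to GL_r(\F_{p^s})\subset GL(V)$ with $V$ of dimension $r$ over $\bar{\F}_p$ and finite image, whose associated bundle is $W$. Since $(F_X^s)^*W\cong W$ forces $c_1(W)=0$ and $\Delta(W)=0$, any proper $\pi_1$-subrepresentation would give a proper subbundle of slope $0=\mu(W)$, contradicting $\mu$-stability of $W$; hence $\rho$ is irreducible, completing the proof.
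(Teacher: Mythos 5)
Your endpoints coincide with the paper's proof: Lange--Stuhler \cite[Satz~1.4]{LS} converts a Frobenius-periodic stable bundle over a finite field into an irreducible representation with finite monodromy, the surjectivity of the specialization homomorphism (\cite[Expos\'e~X,~Th\'eor\`eme~3.8]{SGA1}) lifts it to $\pi_1$ of $X$, and $\mu$-stability gives irreducibility. The gap is in the middle, where the Frobenius-periodic bundle is supposed to come from, and it occurs twice. First, you cannot spread out ``the stratified bundle $E$'' over a finitely generated $\F_p$-subalgebra $R\subset k$: $E$ consists of infinitely many bundles $E_n$, each defined over some finitely generated $R_n$, and passing from $E_n$ to its Frobenius descent $E_{n+1}$ can genuinely enlarge the ring of definition (think of extracting $p$-th roots in $\Pic^\tau$), so $\bigcup_n R_n$ need not be finitely generated; hence there is no closed point at which the whole $F$-divided structure specializes, and your reduction to $\bar\F_p$ already breaks down. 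Second, your pigeonhole step rests on identifying $[V]\mapsto[F_X^*V]$ with (a twist of) the Frobenius endomorphism of the moduli space over $\F_q$; this identification is false. That self-map is the Verschiebung rational map: already for $r=1$ it is multiplication by $p$ on $\Pic^\tau_X$, which is not the Frobenius endomorphism of $\Pic^\tau_X$ (on an abelian variety these two maps do not even have the same degree). Consequently the $F$-divided condition does not say that the points $[E_n]$ form a Frobenius orbit, their fields of definition may grow without bound, the set $\{[E_n]\}\subset M(k)$ may be infinite, and no finiteness or periodicity follows.

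This is exactly the difficulty for which the paper invokes the proof of \cite[Theorem~3.15]{EM}, whose engine is Hrushovski's theorem \cite[Corollary~1.2]{H} (Theorem~\ref{thm3.7} of this paper): the Verschiebung induces a dominant rational self-map $f$ of the Zariski closure of $\{[E_n]\}$ inside a model $M_S\to S$ of the moduli space over a finite field, and Hrushovski's theorem produces a point $x$ over $\bar\F_q$, lying above a \emph{closed} point $s\in S$, with $f^{a}(x)=x^{q^m}$, hence $f^{a a_2}(x)=x$ for suitable $a_2$. This $x$ corresponds to a stable bundle $\sE$ on the good reduction $X_{\bar s}$ with $(F^m)^*\sE\cong\sE$; it is a new bundle on a special fibre, in general neither one of the $E_n$ nor a bundle on $X$ at all. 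Only to this $\sE$ does one then apply Lange--Stuhler and the specialization surjection $\pi_1\surj\pi_1^{{\rm \acute{e}t}}(X_{\bar s},b)$. Your proposal contains no substitute for this step, so as written it does not prove the theorem.
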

\begin{proof}
By the proof of \cite[Theorem~3.15]{EM}, there is a smooth
projective model $X_S\to S$ of $X \to \Spec k$, with model $a_S\in
X_S(S)$ of $a\in X(k)$, where $S$ is smooth affine over $\F_p$,
together with a quasi-projective  model $M_S\to S$ of the moduli of
stable vector bundles considered in \cite[Section~3]{EM},  there is
a closed point $u\to M_S$  of residue field $\F_q$, corresponding to
a bundle $\sE$ over $X_s$, stable over $X_{\bar s}=X_s\otimes
\bar{\mathbb{F}}_p$, where $s$ is the closed point $u$ viewed as a
closed point of $S$, such that $(F^m)^*\sE\cong \sE$, for some $m\in
\N\setminus \{0\}$, where $F$ is the Frobenius of $X_s/\F_q$. Thus,
as explained in \cite[Theorem~3.15]{EM}, $\sE$ trivializes over a
Lang torsor $Y\to X_s$ \cite[Satz~1.4]{LS}, thus its base change
$\sE_{\bar s}$ to $X_{\bar s}$ trivializes over $Y\times_s \bar s\to
X_{\bar s}$. This trivialization defines an irreducible
representation $ \rho':   \pi_1^{{\rm \acute{e}t}}(X_{\bar s},b) \to
GL( \sE_b)$, where $b=a_S\otimes \bar s$. As the specialization
homomorphism ${\rm sp}: \pi_1\to  \pi_1^{{\rm \acute{e}t}}(X_{\bar
s},b)$ is surjective (\cite[Expos\'e~X,~Th\'eor\`eme~3.8]{SGA1}),
the composite $\rho={\rm sp}\circ \rho'$ is   a representation of
$\pi_1$ with the same finite irreducible monodromy. This is a
solution to the problem.

\end{proof}

We now use the following two elementary lemmas.

\begin{lem}[See Chapter~8, Proposition~26 of \cite{Serre}]
\label{lem2.4} Let $G$ be a finite $p$-group and $\rho:G\to GL(V)$
be a representation on a vector space $V\neq 0$ over a field $k$ of
characteristic $p$. Then
$$V^G=\{\,v\in V\,|\, \rho(g)v=v,\,\, \forall\, g\in G\,\,\}\neq
0.$$
\end{lem}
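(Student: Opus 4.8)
The plan is to argue by induction on the order of the $p$-group $G$, exploiting the fact that a nontrivial $p$-group always has a nontrivial center, which lets me peel off a central subgroup of order $p$ and pass to a quotient. First I would settle the base case in which $G=\langle g\rangle$ is cyclic of order $p$. Here the key observation is the Frobenius identity in characteristic $p$: writing $N=\rho(g)-\mathrm{id}_V$, one has $N^p=(\rho(g)-\mathrm{id})^p=\rho(g)^p-\mathrm{id}=\rho(g^p)-\mathrm{id}=0$, since $(x-1)^p=x^p-1$ over a field of characteristic $p$. Thus $N$ is nilpotent, and any nilpotent endomorphism of a nonzero vector space has nonzero kernel: choosing $v\neq 0$ and the largest $j$ with $N^j v\neq 0$, the vector $N^j v$ lies in $\ker N\setminus\{0\}$. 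Since $\ker N=\{v:\rho(g)v=v\}=V^G$, this gives $V^G\neq 0$.

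For the inductive step, suppose $|G|>p$ and the statement holds for all $p$-groups of smaller order. The center $Z(G)$ is nontrivial, so I can choose a subgroup $H\subseteq Z(G)$ with $H\cong\Z/p\Z$. By the base case applied to $H$, the subspace $V^H$ is nonzero. The crucial point is that $V^H$ is stable under all of $G$: for $v\in V^H$, $g\in G$ and $h\in H$ one has $\rho(h)\rho(g)v=\rho(g)\rho(h)v=\rho(g)v$ because $H$ is central, so $\rho(g)v\in V^H$. Hence $\rho$ induces a representation of the quotient $p$-group $G/H$ on the nonzero space $V^H$, to which the induction hypothesis applies, yielding $(V^H)^{G/H}\neq 0$. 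Since $(V^H)^{G/H}=V^G$, the proof is complete.

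The only genuinely delicate points are the two I have flagged: the characteristic-$p$ computation $N^p=0$, which is exactly where the hypothesis on the characteristic enters and which would fail in characteristic $0$ or $\ell\neq p$; and the verification that $V^H$ is a $G$-subrepresentation, for which the centrality of $H$ makes the commutation transparent (mere normality of $H$ would already suffice for $G$-stability, but centrality is precisely what the center of a $p$-group hands us for free). Everything else is formal, and it is worth noting that the argument uses no finiteness assumption on $V$ beyond $V\neq 0$.
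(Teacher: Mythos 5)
Your proof is correct, but there is nothing in the paper to compare it against: the paper does not prove this lemma at all, it simply quotes it as a known result, pointing to Chapter~8, Proposition~26 of \cite{Serre}. The relevant comparison is therefore with Serre's own argument, which is genuinely different from yours. Serre first reduces to a finite situation: given $v\neq 0$, the $\F_p$-span $W$ of the finite orbit $\rho(G)v$ is a finite, $G$-stable set containing $v$. He then counts: every $G$-orbit in $W$ has $p$-power cardinality, so the number of $G$-fixed points in $W$ is congruent to $|W|\equiv 0 \pmod p$; since $0$ is fixed, there are at least $p$ fixed points, hence a nonzero one. Your route --- the Frobenius identity $(\rho(g)-\mathrm{id})^p=\rho(g^p)-\mathrm{id}=0$ for the cyclic base case, followed by induction on $|G|$ through a central subgroup $H\cong \Z/p\Z$ and the identification $(V^H)^{G/H}=V^G$ --- is the other standard proof of this fact, and every step you give (nilpotency forcing a nonzero kernel, $G$-stability of $V^H$ via centrality, the descent to $G/H$) is sound. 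The trade-off: Serre's counting argument is shorter once one grants the orbit-counting congruence, and it localizes the use of characteristic $p$ in the arithmetic fact $|W|\equiv 0\pmod p$; your argument instead localizes it in the nilpotency of $\rho(g)-\mathrm{id}$ and needs no counting or reduction to finite fields, only the finiteness of $G$ (your observations that mere normality of $H$ would suffice, and that no finite-dimensionality of $V$ is needed, are both accurate --- though note Serre's reduction to $W$ achieves the same generality). Either proof is a legitimate substitute for the citation.
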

\begin{lem}\label{lem2.5} Let $G$ be a finite group and $\rho:G\to GL(V)$ be an
irreducible representation on a finite dimensional vector space $V$
over an algebraically closed field $k$ of characteristic $p>0$. If
the commutator $[G,G]$ of $G$ is a $p$-group, then ${\rm dim}(V)=1$.
\end{lem}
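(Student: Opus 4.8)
The plan is to leverage the interplay between the normal $p$-subgroup $H:=[G,G]$, the characteristic-$p$ fixed-point lemma, and irreducibility. First I would observe that $H$ is by hypothesis a finite $p$-group, and it is normal in $G$ by construction. Restricting $\rho$ to $H$ gives a representation of a finite $p$-group on $V\neq 0$ over a field of characteristic $p$, so Lemma~\ref{lem2.4} applies and yields $V^H\neq 0$.

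The crucial step is to promote this $H$-invariant subspace to a $G$-invariant one. For $g\in G$, $v\in V^H$ and $h\in H$ one computes $\rho(h)\rho(g)v=\rho(g)\rho(g^{-1}hg)v=\rho(g)v$, using normality of $H$ to guarantee $g^{-1}hg\in H$ and then invariance of $v$; hence $\rho(g)v\in V^H$, so $V^H$ is a nonzero $G$-subrepresentation of $V$. Since $\rho$ is irreducible, $V^H=V$, which says precisely that $H=[G,G]$ acts trivially on $V$. Consequently $\rho$ factors through the abelianization $G/[G,G]$, an abelian group.

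It then remains to show that an irreducible representation of a finite abelian group over the algebraically closed field $k$ is one-dimensional. For this I would use that the operators $\rho(\bar g)$, $\bar g\in G/[G,G]$, pairwise commute, and that a commuting family of linear operators on a finite-dimensional vector space over an algebraically closed field admits a common eigenvector: one takes an eigenvector for a first operator (its characteristic polynomial splits over $k$), restricts the remaining operators to the corresponding eigenspace, which they preserve by commutativity, and iterates. The resulting common eigenline is $G$-stable, so by irreducibility it is all of $V$, giving $\dim V=1$.

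I expect no step to present a serious obstacle; the only two points demanding genuine care are the $G$-stability of $V^H$, which rests squarely on the normality of $[G,G]$, and the common-eigenvector argument, which one must check survives in characteristic $p$. The latter does, since it relies only on the algebraic closedness of $k$ (to split characteristic polynomials) and not on semisimplicity or diagonalizability, so no complication from the positive characteristic arises.
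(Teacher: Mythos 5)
Your proof is correct and follows essentially the same route as the paper's: both apply Lemma~\ref{lem2.4} to the normal $p$-group $[G,G]$, then use normality of $[G,G]$ together with irreducibility of $\rho$ to force the image of $\rho$ to be abelian, and conclude $\dim V=1$ over the algebraically closed field $k$. The only cosmetic differences are that you show the fixed space $V^{[G,G]}$ is a $G$-subrepresentation (so $[G,G]$ acts trivially and $\rho$ factors through the abelianization), whereas the paper checks directly that $\rho(g_1)\rho(g_2)$ and $\rho(g_2)\rho(g_1)$ agree on the spanning set $\rho(G)v$ for a single fixed vector $v$, and that you spell out the common-eigenvector argument for the abelian case, which the paper leaves implicit.
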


\begin{proof} If $[G,G]$ is a $p$-group, by Lemma~\ref{lem2.4},
there exists a $0\neq v\in V$ such that
$\rho(g_1)\rho(g_2)v=\rho(g_2)\rho(g_1)v$ for any $g_1,\, g_2\in G$.
Since $V$ is an irreducible $G$-module, the sub-vector space spanned
by the orbit $\rho(G) v$ is $V$ itself. Thus for all $w\in V$, all
$g\in G$, there are $a_g(w)\in k$, such that
$$ w=\sum_{g\in G}a_g(w)\rho(g)v.$$ On the other hand, for all
$g_1,g_2,g\in G$, one has $g_1 g_2=g_2g_1 c$ for some $c\in [G,G]$,
thus $g_1g_2
 g=g_2 g_1 c  g=g_2 g_1 g
c'$, for some $c'\in [G,G]$. We conclude that
$\rho(g_1)\rho(g_2)w=\rho(g_2)\rho(g_1)w$ for any $g_1$, $g_2\,\in
G$, that is  $\rho(G)$ is abelian. As $\rho$ is irreducible, $V$
must have dimension $1$.

\end{proof}

\begin{thm}\label{thm2.6} Let $X$ be a smooth
projective variety over an algebraically closed field $k$. Then all
irreducible stratified bundles on $X$ have rank $1$ if and only if
the commutator $[\pi_1,\pi_1]$ of $\pi_1$ is a pro-$p$-group.
\end{thm}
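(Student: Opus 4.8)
The ``only if'' direction is precisely Gieseker's Theorem~\ref{thm2.2}(ii), so the real content is the converse. I would assume $[\pi_1,\pi_1]$ is a pro-$p$-group and suppose for contradiction that $X$ carries an irreducible stratified bundle $E=\{E_n,\sigma_n\}$ of rank $r\ge 2$. The plan is to feed $E$ into Theorem~\ref{thm2.3} to manufacture a finite irreducible $\bar\F_p$-representation of $\pi_1$ of dimension $r$, and then to contradict this with Lemma~\ref{lem2.5}.

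To apply Theorem~\ref{thm2.3} I first need a stratified bundle of rank $r$ whose constituent bundles are $\mu$-stable. The plan is to show that the components $E_n$ of an irreducible stratified bundle are $\mu$-stable, at least for $n\gg 0$. Granting this, I would then pass to the shifted tower $\{E_{n_0+i},\sigma_{n_0+i}\}_{i\in\N}$, which is again a stratified bundle of the same rank $r$, is irreducible (its sub-stratified-bundles correspond to those of $E$ under the Frobenius relabelling), and now has all components $\mu$-stable. Theorem~\ref{thm2.3} then produces an irreducible representation $\rho\colon \pi_1\to GL(V)$ with finite monodromy, where $V$ is an $r$-dimensional vector space over $\bar\F_p$.

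With such a $\rho$ in hand the conclusion is quick. Let $G=\im(\rho)$, a finite quotient of $\pi_1$ acting irreducibly on $V$. Since $[\pi_1,\pi_1]$ is a pro-$p$-group and $G$ is finite, its commutator $[G,G]$, being the continuous image of $[\pi_1,\pi_1]$ in $G$, is a finite $p$-group. As $\bar\F_p$ is algebraically closed of characteristic $p$ and $V$ is irreducible, Lemma~\ref{lem2.5} forces $\dim_{\bar\F_p}V=1$, contradicting $r\ge 2$. Hence no irreducible stratified bundle of rank $\ge 2$ can exist, which is the desired converse.

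The main obstacle is the stability reduction in the second paragraph: relating the Tannakian irreducibility of the tower $E$ in $\textbf{str}(X)$ to the ordinary $\mu$-stability of the individual bundles $E_n$. The delicate point is that irreducibility of $E$ does not at face value prevent some $E_n$ from being unstable, or even from splitting while the tower remains irreducible; ruling this out should require the boundedness of the family $\{E_n\}$ together with Langer-type control of semistability under Frobenius pullback. This is where I expect the real work to lie; the remaining steps are formal once Theorem~\ref{thm2.3} is applicable.
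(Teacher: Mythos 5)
Your proof follows the paper's argument step for step: one direction is Theorem~\ref{thm2.2}(ii); for the converse, shift the tower so that its components are $\mu$-stable, apply Theorem~\ref{thm2.3} to produce a finite irreducible representation of $\pi_1$ of dimension $r\ge 2$, and contradict Lemma~\ref{lem2.5}. Your group-theoretic bridge (the image of $[\pi_1,\pi_1]$ in the finite monodromy group $G$ is $[G,G]$, and is a finite $p$-group because a continuous finite quotient of a pro-$p$-group is a $p$-group) is correct, and is precisely what the paper leaves implicit when it concludes ``by Lemma~\ref{lem2.5}, this is impossible.''

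The one step you could not supply --- $\mu$-stability of the $E_n$ for $n\gg 0$ --- is where your proposal has a genuine gap, but it requires no new work in this paper: it is filled by a single citation to \cite[Proposition~2.3]{EM}. That proposition asserts that for \emph{any} stratified bundle $E$ there is an $n_0$ such that the shifted tower $E(n_0)$ is a successive extension, \emph{inside} $\textbf{str}(X)$, of stratified bundles whose underlying bundles are $\mu$-stable. Since this filtration consists of sub-objects in the stratified category, irreducibility of $E(n_0)$ (which follows from irreducibility of $E$, exactly as you argue via Frobenius relabelling) forces the filtration to have length one, so all $E_n$ with $n\ge n_0$ are $\mu$-stable. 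This disposes of your worry that some $E_n$ could be unstable or split while the tower stays irreducible: any such degeneration is, by that proposition, eventually realized by sub-objects of the shifted tower in $\textbf{str}(X)$, which irreducibility rules out. Your guess at the needed ingredients (boundedness of the family $\{E_n\}$ together with control of stability under Frobenius pullback) is indeed the substance of the proof of that proposition in \cite{EM}, but here one simply imports the statement rather than reproving it.
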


\begin{proof} One direction is the (ii) in Theorem~\ref{thm2.2}. We prove
the converse. Assume that $[\pi_1,\pi_1]$ is a pro-$p$-group. Let
$E=(E_n,\sigma_n)_{n\in \Bbb N}$ be an irreducible stratified bundle
of rank $r\ge 1$ on $X$. Assume $r\ge 2$. Then by \cite[Proposition
2.3]{EM}, there is a  $n_0 \in \N$ such that
 the stratified bundle $E(n_0):=(E_n,
\sigma_n)_{n\ge n_0} $ is a successive extension of stratified
bundles $U=(U_n, \tau_n)$ with underlying bundles $U_n$ being
$\mu$-stable. But $E$ being irreducible implies that  $E(n_0)$ is
irreducible as well. Hence all the $E_n$ are $\mu$-stable for $n\ge
n_0$. By Theorem~\ref{thm2.3},  there is an irreducible
representation $\rho: \pi_1\to GL(V)$ of dimension $r\ge 2$ with
finite monodromy over an algebraically closed field of
characteristic $p>0$. By Lemma~\ref{lem2.5}, this is impossible.
Thus $r=1$.
\end{proof}

\section{Extensions of stratified line bundles}

In this section, we prove the following theorem.

\begin{thm}\label{thm3.1} Let $X$ be a smooth projective connected
variety over an algebraically closed field $k$ of characteristic
$p>0$. If the {\'e}tale fundamental group $\pi_1$ of $X$ is abelian
and has no non-trivial $p$-power order quotient, then any extension
$$0\to \mathbb{L}\to \mathbb{V}\to \mathbb{L}'\to 0$$
in $\rm{\bf{str}}(X)$ is split when $\mathbb{L}$ and $\mathbb{L}'$
are rank $1$ objects.
\end{thm}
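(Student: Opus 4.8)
The plan is to reduce the splitting of the extension to the vanishing of a suitable $\mathrm{Ext}^1$ group, and to derive that vanishing from the hypothesis on $\pi_1$. First I would twist by the rank one object $\mathbb{L}'^{\vee}$, which is invertible in the rigid monoidal category $\mathbf{str}(X)$; since splitting is preserved under tensoring with an invertible object, it suffices to treat extensions of the trivial object $\mathbf{1}$ by the rank one object $\mathbb{M} := \mathbb{L}\otimes\mathbb{L}'^{\vee}$. Such extensions are classified by $\mathrm{Ext}^1_{\mathbf{str}(X)}(\mathbf{1},\mathbb{M})$, which equals $H^1_{\mathrm{str}}(X,\mathbb{M})$, the first cohomology of the stratified line bundle $\mathbb{M}$ in the Tannakian sense. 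So the theorem becomes the assertion that $H^1_{\mathrm{str}}(X,\mathbb{M})=0$ for every stratified line bundle $\mathbb{M}$.

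Next I would split into two cases according to whether $\mathbb{M}$ is trivial. If $\mathbb{M}\cong\mathbf{1}$, the extension is classified by $\mathrm{Ext}^1_{\mathbf{str}(X)}(\mathbf{1},\mathbf{1})$, which under the Tannakian equivalence \eqref{2.1} corresponds to extensions of the trivial representation of $\pi^{\mathrm{str}}$ by itself, i.e.\ to $\mathrm{Hom}(\pi^{\mathrm{str},\mathrm{ab}},\G_a)$ type data; the self-extensions of $\mathbf{1}$ measure the unipotent part, and one shows this vanishes because $\pi_1$ having no $p$-power quotient forces the relevant additive cohomology to vanish. If $\mathbb{M}\not\cong\mathbf{1}$, then $\mathbb{M}$ is a non-trivial rank one stratified bundle, hence corresponds (via the theory of $F$-divided line bundles) to a non-trivial character of $\pi_1$ of order prime to $p$ on some finite abelian quotient, and the extension is controlled by the cohomology of the associated non-trivial rank one local system. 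The key input here is the characteristic $p$ analog of the statement quoted in the introduction: for $\pi_1$ abelian, a non-trivial rank one object has vanishing first cohomology.

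The main obstacle, and where I expect the real work to lie, is establishing this last vanishing $H^1_{\mathrm{str}}(X,\mathbb{M})=0$ for non-trivial $\mathbb{M}$: unlike the complex-analytic setting, we cannot directly invoke Hodge theory or a unitary structure, and the stratified $H^1$ is an inverse/direct limit over Frobenius pullbacks of ordinary coherent $H^1(X,\mathbb{M}_n)$ groups, so one must control these coherent cohomologies uniformly along the Frobenius tower. I anticipate that this is exactly the content that Theorem~\ref{thm3.4} is designed to supply, using the moduli space of non-trivial extensions from Proposition~\ref{prop3.5} together with the Frobenius-induced rational map $f$ and Hrushovski's theorem to produce a periodic point and hence a finite-monodromy obstruction; the hypothesis that $\pi_1$ has no non-trivial $p$-power quotient then rules out that obstruction. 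Thus the heart of the argument is not the formal Tannakian reduction, which is routine, but the geometric input controlling extensions along the Frobenius pullback dynamics.
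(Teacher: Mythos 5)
Your opening move (twist by $(\mathbb{L}')^{\vee}$ and recast splitting as vanishing of $\mathrm{Ext}^1_{\mathbf{str}(X)}(\mathbf{1},\mathbb{M})$) coincides with the paper's first step, and your closing paragraph correctly locates the heavy machinery in Proposition~\ref{prop3.5}, the Frobenius rational map and Hrushovski's theorem. But the mechanism you put in between has a genuine gap. Your dichotomy ``$\mathbb{M}$ trivial versus non-trivial,'' together with the claim that a non-trivial rank one object ``corresponds to a non-trivial character of $\pi_1$ of order prime to $p$,'' is false over a general algebraically closed field $k$: the underlying line bundles $L_n$ of $\mathbb{M}$ are numerically trivial and infinitely $p$-divisible (since $L_n\cong L_{n+1}^p$), but they need not be torsion, hence $\mathbb{M}$ need not have finite monodromy and no character of $\pi_1$ is attached to it. (For instance, a non-torsion point of $\Pic^0$ over $k=\overline{\F_p(t)}$ admits a compatible system of $p$-power roots because $[p]$ is surjective on $k$-points.) Torsion prime to $p$ is the same as Frobenius-periodicity $(F_X^*)^aL\cong L$, and that periodicity is exactly what must be \emph{manufactured}; this is why the paper's case distinction is not triviality of $\mathbb{M}$ but finiteness versus infiniteness of the set $\mathbb{E}_X$ of Frobenius-related extension classes, with Hrushovski's theorem (Theorem~\ref{thm3.4}) serving to produce a periodic extension after passage to a good reduction over $\overline{\F}_p$ in the infinite case.

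The second, and more serious, missing idea is what to do once periodicity is available. The input you cite --- vanishing of $H^1$ of a non-trivial rank one local system when $\pi_1$ is abelian, as in Lemma~\ref{lem3.10} and Remark~\ref{rmk3.11} --- has no coherent counterpart and does not apply: the obstruction classes $[e_n]$ of the extensions $(L_n\hookrightarrow V_n\twoheadrightarrow \sO_X)$ live in the coherent cohomology groups $H^1(X,L_n)$, which are not computed by any local system and which in general do \emph{not} vanish for $L_n$ non-trivial. What replaces this in characteristic $p$ is Proposition~\ref{prop3.2}: if $(F_X^*)^aL\cong L$ and $\pi_1$ is abelian with no non-trivial $p$-power quotient, then $(F_X^*)^a$ is \emph{nilpotent} on $H^1(X,L)$. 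Its proof trivializes $L$ on a Kummer cover $\phi:Y\to X$ with group $\Z/(p^a-1)$, checks (Claim~\ref{claim3.3}, where abelianness of $\pi_1$ is used in an essential way) that $\pi_1$ of $Y$ again has no $p$-power quotient, and applies \cite[Proposition~2.4]{EM} on $Y$. Since the stratified structure makes each $[e_n]$, up to scale, an arbitrarily high Frobenius pullback of later classes, nilpotency forces $[e_n]=0$, the desired contradiction --- both in the finite case on $X$ itself and, after Hrushovski, on $X_{\bar s}$ (whose fundamental group inherits the hypotheses via specialization). Without this coherent nilpotency statement, neither your hypothesis on $\pi_1$ nor the periodic point produced by Theorem~\ref{thm3.4} yields any contradiction; your trivial case $\mathbb{M}\cong\mathbf{1}$ is just the special instance $L=\sO_X$ of it. So what you have deferred as routine is in fact the key new ingredient of the proof.
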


We start the proof by the following proposition, which can be
considered as a generalization of \cite[Proposition 2.4]{EM}.

\begin{prop}\label{prop3.2}
Let $L$ be a line bundle on $X$ such that $(F_X^*)^aL$ is isomorphic
to $L$, for some non zero natural number $a$, where $F_X: X\to X$ is
the absolute Frobenius map. Then
$$(F_X^*)^a: H^1(X,L)\to H^1(X,L)$$ is nilpotent if $\pi_1$
is abelian without non-trivial $p$-power quotient.
\end{prop}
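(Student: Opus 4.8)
The plan is to translate the nilpotence of the $p^a$-linear endomorphism $\Phi := (F_X^*)^a$ of $V := H^1(X,L)$ into a vanishing statement in \'etale cohomology, and thence into group cohomology of $\pi_1$. The starting observation is pure semilinear algebra: over the algebraically closed field $k$, a finite-dimensional $k$-vector space $V$ equipped with a $p^a$-linear (Frobenius-semilinear) endomorphism $\Phi$ decomposes canonically as $V=V_{\mathrm{bij}}\oplus V_{\mathrm{nil}}$, with $\Phi$ bijective on the first summand and nilpotent on the second; moreover, by Lang's theorem the bijective part is spanned by its $\Phi$-fixed vectors, so that $V^{\Phi}=V_{\mathrm{bij}}^{\Phi}$ and $\dim_{\F_{p^a}}V^{\Phi}=\dim_k V_{\mathrm{bij}}$. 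Hence $\Phi$ is nilpotent if and only if $V^{\Phi}=0$, and it suffices to prove $H^1(X,L)^{(F_X^*)^a=1}=0$.

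First I would fix an isomorphism $(F_X^*)^aL\cong L$ and regard it as a $p^a$-linear endomorphism $\Phi$ of the sheaf $L$, that is $\Phi(fs)=f^{p^a}\Phi(s)$. In the \'etale topology the map $\Phi-1\colon L\to L$ is surjective (it is \'etale-locally the separable Artin--Schreier map $t\mapsto ut^{p^a}-t$), and its kernel $\mathcal L:=\ker(\Phi-1)$ is a locally constant sheaf of one-dimensional $\F_{p^a}$-vector spaces; this is the Artin--Schreier--Katz (Lange--Stuhler \cite{LS}) description of $L$, and $\mathcal L$ corresponds to a character $\chi\colon\pi_1\to\F_{p^a}^{\times}$, automatically of order prime to $p$. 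The short exact sequence
$$0\to\mathcal L\to L\xrightarrow{\Phi-1}L\to 0$$
together with the identification of coherent and \'etale cohomology for the $\sO_X$-module $L$ yields a long exact sequence whose relevant portion reads
$$H^1_{\et}(X,\mathcal L)\to H^1(X,L)\xrightarrow{\Phi-1}H^1(X,L).$$
Therefore $H^1(X,L)^{\Phi=1}=\ker(\Phi-1)$ is a quotient of $H^1_{\et}(X,\mathcal L)$, and it is enough to show $H^1_{\et}(X,\mathcal L)=0$.

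For the final step I would use the comparison $H^1_{\et}(X,\mathcal L)\cong H^1(\pi_1,V_{\chi})$, where $V_\chi=\F_{p^a}$ carries the $\pi_1$-action through $\chi$; this is an isomorphism in degree $1$ because the universal pro-\'etale cover is simply connected, so $H^1$ of the constant sheaf $\mathcal L$ restricted to it vanishes and the edge map of the Hochschild--Serre spectral sequence is bijective. Now both hypotheses on $\pi_1$ enter. Being abelian with no non-trivial $p$-power quotient, $\pi_1$ is the product of its pro-$\ell$ parts over primes $\ell\neq p$; consequently every closed subgroup again has trivial pro-$p$ part, hence no $\F_p$-quotient. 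If $\chi$ is trivial then $H^1(\pi_1,V_\chi)=\Hom(\pi_1,\F_{p^a})=0$. If $\chi$ is non-trivial, set $N=\ker\chi$ and $G=\pi_1/N$, a finite cyclic group of order prime to $p$; the inflation--restriction sequence places $H^1(\pi_1,V_\chi)$ between $H^1(G,V_\chi)$, which vanishes since $|G|$ is invertible in $k$, and $H^1(N,V_\chi)^{G}=\Hom(N,\F_{p^a})^{G}=0$ since $N$ has no $\F_p$-quotient. Either way $H^1_{\et}(X,\mathcal L)=0$, which finishes the proof.

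The step I expect to demand the most care is the passage through \'etale cohomology: setting up the Artin--Schreier sequence for the $p^a$-linear $\Phi$ with the correct $\F_{p^a}$-coefficient sheaf, and justifying that $H^1_{\et}(X,\mathcal L)$ is genuinely computed by $H^1(\pi_1,V_\chi)$ rather than merely receiving a map from it. The group-theoretic vanishing is then routine, but it is precisely here that the two hypotheses---``abelian'' (to split off the prime-to-$p$ cyclic quotient and to control the subgroup $N$) and ``no $p$-power quotient'' (to kill $\Hom(-,\F_p)$)---are both indispensable.
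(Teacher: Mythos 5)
Your proof is correct, and it takes a genuinely different route from the paper's. The paper converts the chosen isomorphism $(F_X^*)^aL\cong L$ into an isomorphism $L^{p^a-1}\cong\sO_X$, hence a Kummer cover $\phi\colon Y\to X$ with group $\Z/(p^a-1)$ trivializing $L$; it then proves (Claim~\ref{claim3.3}, where abelianness of $\pi_1$ enters via a push-out/splitting argument) that $\pi_1^{\et}(Y)$ is again abelian without $p$-power quotient, invokes \cite[Proposition~2.4]{EM} to get nilpotence of Frobenius on $H^1(Y,\sO_Y)$, and descends to $X$ through the injectivity of $\phi^*\colon H^1(X,L)\inj H^1(Y,\sO_Y)$. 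You instead stay on $X$ and prove the twisted statement directly: the semilinear Fitting/Lang reduction (nilpotent if and only if no nonzero $\Phi$-fixed vectors), the Lange--Stuhler local system $\mathcal L=\ker(\Phi-1)$ with its character $\chi\colon\pi_1\to\F_{p^a}^\times$, the twisted Artin--Schreier long exact sequence identifying $H^1(X,L)^{\Phi=1}$ with a quotient of $H^1_{\et}(X,\mathcal L)$, and the vanishing of $H^1(\pi_1,V_\chi)$ by inflation--restriction along $N=\ker\chi$. In effect you have inlined and generalized the proof of \cite[Proposition~2.4]{EM} (which is your argument in the untwisted case $\chi=1$) rather than reducing to it by a cover; your subgroup $N$ plays the role of the paper's $\pi_1^{\et}(Y)$, and abelianness enters for you through the fact that an abelian profinite group without $p$-power quotient has trivial pro-$p$ part, so that all its closed subgroups inherit this property --- the exact counterpart of Claim~\ref{claim3.3}. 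What the paper's route buys is brevity and the comfort of citing an existing coherent-cohomology statement as a black box; what yours buys is self-containedness and transparency about where each hypothesis is used, at the price of having to set up the \'etale-versus-group-cohomology comparison in degree one, which you justify correctly.
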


\begin{proof} We choose an isomorphism  $(F_X^*)^aL\cong L$, or
equivalently an isomorphism $L^{p^a-1}\cong \sO_X$. This define a Kummer
cover $\phi: Y\to X$ with Galois group $H=\Z/(p^{a}-1)$  such that
$$\phi^*L=\sO_Y.$$
One has an exact sequence  (recall that $\pi_1:=\pi_1^{{\rm
\acute{e}t}}(X,\bar a)$)
$$1\to  \pi_1^{{\rm \acute{e}t}(Y,\bar b)}\to  \pi_1\to
H\to 1,$$ where $\bar b$ is a geometric point of $Y$ above the
geometric point $\bar a$ of $X$.

\begin{claim}\label{claim3.3}   $\pi_1^{{\rm \acute{e}t}}(Y, \bar b)$ is
abelian without $p$-power quotient.
\end{claim}
\begin{proof}
 Since  $\pi_1$ is commutative, the
kernel of any quotient map $$ \pi_1^{{\rm \acute{e}t}}(Y,\bar b)
\surj K$$ is normal in  $\pi_1$. Taking for $K$ a finite $p$-group
defines the push-out exact sequence $$1\to K\to G\to H\to 1$$ where
$G$ is a quotient of  $\pi_1$, thus is commutative and has no
$p$-power quotient. Since $K$ splits in $G$, $K=\{1\}$.
\end{proof}
 Then, by
\cite[Proposition 2.4]{EM}, there is an
integer $N>0$ such that $$(F_Y^*)^N: H^1(Y,\sO_Y)\to H^1(Y,\sO_Y)$$
is a zero map, which implies that
$$\phi^*\cdot (F_X^*)^{Na}=(F_Y^*)^{Na}\cdot\phi^*: \,\,H^1(X,L)\to
H^1(Y,\sO_Y)$$ is a zero map. But $\phi^*:H^1(X,L)\hookrightarrow
H^1(Y,\phi^*L)=H^1(Y,\sO_Y)$ is injective, thus $(F_X^*)^{Na}:
H^1(X,L)\to H^1(X,L)$ is a zero map.

\end{proof}

%\begin{lem}\label{lem3.3} If $\pi_1=\pi^{\rm \acute{e}t}(X,x_0)$
%is abelian without non-trivial $p$-power quotient, then so is
%$\pi^{\rm \acute{e}t}(Y,y_0)$.
%\end{lem}

%\begin{proof} Let $\pi^{\rm \acute{e}t}(Y,y_0)\to K\to 0$ be a
%finite quotient with $|K|=p^a$, and
%$$0\to G\to \pi^{\rm \acute{e}t}(Y,y_0)\to K\to 0$$
%$$0\to G\to \pi^{\rm \acute{e}t}(X,x_0)\to Q\to 0$$
%Then we have exact sequence
%$$0\to K\to Q\to H\to 0$$
%thus $Q$ is a finite abelian group and $|Q|=p^a\cdot |H|$, which
%implies that $Q$ has a nontrivial quotient with order of $p$-power,
%so is $\pi^{\rm \acute{e}t}(X,x_0)$. A contradiction.
%\end{proof}

\begin{proof}[Proof of Theorem~\ref{thm3.1}] By twisting with
$(\mathbb{L}')^{-1}$, we may assume that $\mathbb{L}'= \mathbb{I}$,
the trivial object in $\textbf{str}(X)$. We prove
Theorem~\ref{thm3.1} by contradiction. If there exists a nontrivial
extension \ga{3.1} {0\to \mathbb{L}\to\mathbb{V}\to \mathbb{I}\to
0}in $\textbf{str}(X)$, then, by definition,  there is a set
$$\mathbb{E}_X=\{\,\,\,(L_i\hookrightarrow
V_i\twoheadrightarrow\sO_X)\,\,\}_{i\in \mathbb{N}}$$ of isomorphism
classes of non-trivial extensions on $X$ such that
$$(L_i\hookrightarrow V_i\twoheadrightarrow\sO_X)\cong(F_X^*L_{i+1}\hookrightarrow F_X^*V_{i+1}\twoheadrightarrow
F^*_X\sO_X)$$ for any $i\in \mathbb{N}$ {\it modulo scale}. This
means that there is are $\lambda_i \in k^\times=\Gamma(X,
\sO^\times_X)$ such that the push down of $(L_i\hookrightarrow
V_i\twoheadrightarrow\sO_X)$ by $L_i\xrightarrow{\lambda_i} L_i$ is
isomorphic as an extension to $(F_X^*L_{i+1}\hookrightarrow
F^*_XV_{i+1}\twoheadrightarrow\sO_X)$. In particular, $L_i$ is
isomorphic as a line bundle to $F_X^*(L_{i+1})$.

 If $\mathbb{E}_X$ is a
finite set, then there are only finitely many non-isomorphic $L_i$,
thus there is a $a\in \N\setminus \{0\}$ such that
$(F^*_X)^aL_i\cong L_i$  for all $i$. We obtain a contradiction by
Proposition~\ref{prop3.2}.

If $\mathbb{E}_X$ is a infinite set, by Theorem~\ref{thm3.4} below,
there is a nontrivial extension
\ga{3.2}{0\to L\to V\to \sO_{X_{\bar
s}}\to 0} on a good reduction $X_{\bar s}$ of $X$ (thus over
$\overline{\mathbb{F}}_p$) such that for an integer $a>0$
$$0\to (F^*_{X_{\bar s}})^aL\to (F^*_{X_{\bar s}})^aV\to (F^*_{X_{\bar s}})^a\sO_{X_{\bar s}}\to 0$$
is isomorphic to the extension \eqref{3.2}  modulo scale. Then, by
using Proposition~\ref{prop3.2} to $X_{\bar s}$, we obtain again a
contradiction since $\pi_1^{{\rm \acute{e}t}}(X_{\bar s}, a_{\bar
s})$, which is a quotient of  $\pi_1$ via the specialization map, is
also abelian without non-trivial $p$-power quotient. Here $a_{\bar
s}$ is a specialization of the geometric point $\bar a$ used to
define $\pi_1$.
\end{proof}

\begin{thm}\label{thm3.4} Let $X$ be a smooth projective variety
over an algebraically closed field $k$.  If there exists an infinite
set $\mathbb{E}_X$ of equivalent classes of nontrivial extensions
$$(L_i\hookrightarrow V_i\twoheadrightarrow\sO_X),\quad i\in\mathbb{N}$$
satisfying $(L_i\hookrightarrow V_i\twoheadrightarrow\sO_X)\cong
(F^*_XL_{i+1}\hookrightarrow F^*_XV_{i+1}\twoheadrightarrow
F^*_X\sO_X)$ for any $i\in\mathbb{N}$,  modulo scale,  then there
exists a nontrivial extension $$0\to L\to V\to \sO_{X_{\bar s}}\to
0$$ on a good reduction $X_{\bar s}$ of $X$ (over $
\overline{\mathbb{F}}_p$) such that for some $a>0$
$$\left((F^*_{X_{\bar s}})^aL\hookrightarrow (F^*_{X_{\bar s}})^aV\twoheadrightarrow (F^*_{X_{\bar s}})^*\sO_{X_{\bar
s}}\right)\cong (L\hookrightarrow V\twoheadrightarrow\sO_{X_{\bar
s}})$$ modulo scale.
\end{thm}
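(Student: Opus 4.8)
We have an infinite set $\mathbb{E}_X$ of nontrivial extension classes $(L_i \hookrightarrow V_i \twoheadrightarrow \sO_X)$, and the condition that each extension is isomorphic modulo scale to the Frobenius pullback of the next. The extension class of $(L_i \hookrightarrow V_i \twoheadrightarrow \sO_X)$ lives in $H^1(X, L_i) = \operatorname{Ext}^1(\sO_X, L_i)$. The Frobenius-pullback relation says the class of $[V_i]$ in $H^1(X, L_i)$ maps to the class of $[V_{i+1}]$ under $F_X^*: H^1(X, L_{i+1}) \to H^1(X, L_i)$ (using $L_i \cong F_X^* L_{i+1}$), up to a scalar $\lambda_i$.

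**Moduli space and the Frobenius map.** We want a space parametrizing such extensions. The relevant result is Proposition~\ref{prop3.5}, which is announced in the introduction as constructing "a moduli space of non-trivial extensions, with a rational map $f$ induced by Frobenius pullback." So the line bundles $L_i$ must first be controlled: the classes $[L_i] \in \Pic(X)$ satisfy $[L_i] = p\cdot[L_{i+1}]$, so they generate a $p$-divisible sequence in $\Pic(X)$. Since $\Pic^\tau(X)$ (or the relevant component) is of finite type, the $L_i$ with nonzero $H^1$ are forced into a bounded family; combined with $\mu$-considerations, the projectivized extension classes $\mathbb{P}(H^1(X,L_i))$ assemble into a quasi-projective moduli space $M$ over a finitely generated base. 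Crucially this must be spread out: we descend the whole picture to a scheme $M_S \to S$ with $S$ a variety over $\mathbb{F}_p$, so that the $\mathbb{F}_p$-structure makes the Frobenius meaningful as a self-map.

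**Applying Hrushovski.** The rational self-map $f: M \dashrightarrow M$ induced by Frobenius pullback (sending $[V_{i+1}] \mapsto [V_i]$ modulo scale) is, after spreading out, the map associated to a power of the geometric Frobenius on a variety over a finite field. The existence of an infinite orbit $\{[V_i]\}$ is exactly what feeds Hrushovski's theorem (a Lang–Weil/model-theoretic statement that a dominant self-correspondence of a variety over $\overline{\mathbb{F}}_p$ has a fixed point, or more precisely a point lying over a closed point of the $\mathbb{F}_p$-model fixed by a Frobenius power). This yields a closed point of $M_S$ whose associated extension $(L \hookrightarrow V \twoheadrightarrow \sO_{X_{\bar s}})$ over some good reduction $X_{\bar s}$ satisfies $(F_{X_{\bar s}}^*)^a[V] \cong [V]$ modulo scale for some $a > 0$. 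The point being closed and nontrivial guarantees the resulting extension is both nontrivial and periodic under Frobenius pullback, which is the conclusion sought.

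**The main obstacle.** The delicate step is not invoking Hrushovski itself but arranging the geometry so that Hrushovski applies: namely, constructing $M_S \to S$ as a genuine finite-type model and verifying that $f$ descends to an honest (rational) self-map commuting with Frobenius, that the infinite set $\mathbb{E}_X$ produces a Frobenius-stable point rather than merely accumulating, and that nontriviality (the extension class being nonzero) is preserved under specialization to the closed point. One must also handle the scalars $\lambda_i$ correctly—this is precisely why one works with \emph{projectivized} extension classes $\mathbb{P}(H^1)$, so that the $\lambda_i$ disappear and $f$ is well-defined on $M$. I expect the bulk of the work, carried out in Proposition~\ref{prop3.5}, to be the boundedness and construction of $M_S$ together with the verification that the good-reduction extension remains nonsplit.
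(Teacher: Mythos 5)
Your proposal correctly identifies the paper's overall strategy --- projectivize the extension classes so the scalars $\lambda_i$ disappear, assemble them into a moduli space $M\to S$ with a rational self-map $f$ induced by Frobenius pullback (Proposition~\ref{prop3.5}), and invoke Hrushovski's theorem --- but it has a genuine gap exactly where Hrushovski is supposed to be applied. Theorem~\ref{thm3.7} requires a variety $Y$ defined over $\F_q$, a correspondence $\Gamma$ irreducible over $\overline{\F}_q$, and \emph{both projections $\Gamma\to Y$ dominant}. The rational map $f:M\dashrightarrow M$ has no reason to be dominant on $M$, and $M$ has no reason to be irreducible; an infinite backward orbit $\{e_i\}$ with $f(e_i)=e_{i-1}$ does not by itself ``feed'' the theorem, contrary to what your third paragraph asserts. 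The missing argument --- which is the actual content of the paper's proof of Theorem~\ref{thm3.4}, not of Proposition~\ref{prop3.5} --- is the construction of an $f$-stable subvariety on which $f$ \emph{is} dominant: one sets $Z_k=\bigcap_m \overline{\E_X(m)}$, where $\E_X(m)$ is the $m$-th tail of the set $\E_X$; by noetherianity $Z_k=\overline{\E_X(m)}$ for all large $m$; since $f(\E_X(m))=\E_X(m-1)$ is dense in $Z_k$ and $f(Z_k\cap M^0_k)\subset Z_k$, the map $f$ restricts to a dominant rational self-map of $Z_k$. One must then descend $Z_k$ to a model over a finite field (shrinking/extending the base so that its irreducible components are geometrically irreducible over $\F_q$) and replace $f$ by a power $f^{a_1}$ preserving a single component before Theorem~\ref{thm3.7} applies. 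Your closing paragraph attributes ``the bulk of the work'' to boundedness and the construction of $M_S$; in fact that part is Proposition~\ref{prop3.5}, which you are entitled to quote, whereas the dominance argument just described is the heart of the proof of the present theorem and appears nowhere in your text.

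A second, smaller omission: Hrushovski's theorem does not produce a fixed point. It produces $x\in Y(\overline{\F}_q)$ with $(x,x^{q^m})\in\Gamma$, i.e.\ $f^{a_1}(x)=x^{q^m}$, a Frobenius-twisted fixed point, with $x$ avoiding the indeterminacy locus. To obtain the genuine periodicity $f^a(x)=x$ that Theorem~\ref{thm3.4} asserts (via Proposition~\ref{prop3.5}(3)), one still needs the computation that $f^{a_1}$ is given by rational functions with coefficients in $\F_q$, hence commutes with $y\mapsto y^{q^m}$, so that $f^{a_1a_2}(x)=x^{q^{a_2m}}$, and this equals $x$ for suitable large $a_2$ because the coordinates of $x$ lie in a finite field. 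Your parenthetical gloss (``a point lying over a closed point of the $\F_p$-model fixed by a Frobenius power'') gestures at this, but without carrying it out you only have periodicity up to Frobenius twist, which is weaker than the statement to be proved.
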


We now prove Theorem~\ref{thm3.4}.  Let $X_S\to S$ be a smooth
projective model of $X/k$, endowed with a section, where $S$ is a
smooth affine irreducible variety over $\mathbb{F}_{q}$ with $H^1(X,
\sO_{X_S})=H^0(S, \sO_S)$. So $X_S\to S$ has smooth projective
geometrically irreducible fibers. Then we have
\begin{prop}\label{prop3.5} There exists a reduced $S$-scheme $M\to
S$ of finite type and a rational map $f: M\dashrightarrow M$ over
$S$ such that
\begin{itemize}
\item [(1)]
For any field extension $K\supset \F_q$, the set $M(K)=\{e\}$ of
$K$-valued points consists of isomorphism classes of non-trivial
extensions  $L\hookrightarrow V\twoheadrightarrow\sO_{X_{s}}$ modulo
scale on $X_{ s}$, where $X_{ s}$ is the fiber of $X_S\to S$ at the
image $s\in S(K)$ of $e \in M(K)$.
\item [(2)] The set  $ \mathbb{E}_X$ of Theorem~\ref{thm3.4} minus
finitely many elements lies in $ M(k)$.
\item [(3)]  The rational map  $f:M\dashrightarrow M$ is
well-defined at $e=(L\hookrightarrow V\twoheadrightarrow\sO_{X_{
s_0}})\in M(K)$  if and only if the pull-back extension
$$0\to F^*_{X_{s}}L\to F^*_{X_{s}}V \to F^*_{X_s}
\sO_{X_s}\to 0$$ under the absolute Frobenius $F_{X_{s}}$ does not
split, in which case
$$f(e)=(F^*_{X_{s}}L\hookrightarrow F^*_{X_{s}}V \twoheadrightarrow
F^*_{X_{s}}\sO_{X_{ s}})  \in M(K).$$
\end{itemize}
\end{prop}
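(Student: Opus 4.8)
Proposition~\ref{prop3.5} asks for a parameter space $M$ over $S$ whose $K$-points classify non-trivial extensions of $\sO_{X_s}$ by a line bundle $L$ (mod scaling), together with a rational self-map $f$ realizing Frobenius pullback of such extensions. The key facts I need to organize around are: (a) extensions $0\to L\to V\to \sO_{X_s}\to 0$ are classified by $\operatorname{Ext}^1_{X_s}(\sO_{X_s}, L)\cong H^1(X_s, L)$; (b) passing to non-trivial extensions mod scale means passing to the projectivization $\P(H^1(X_s,L))$; and (c) the line bundle $L$ itself must be allowed to vary. Since $L_i\cong F_X^*L_{i+1}$, and Frobenius pullback multiplies $\operatorname{Pic}^0$ by $p$, the relevant $L$ live in the part of $\operatorname{Pic}(X_s)$ that is "eventually periodic" under Frobenius, but for the construction of $M$ I only need $L$ to range over some constructible family of line bundles with $H^1(X_s,L)\neq 0$.

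**The plan for constructing $M$.** First I would fix a Poincaré line bundle on $X_S\times_S P$, where $P\to S$ is (a suitable finite-type piece of) the relative Picard scheme $\operatorname{Pic}_{X_S/S}$, so that the fiber over a point of $P$ lying above $s\in S(K)$ is the corresponding line bundle $L$ on $X_s$. Then I would form the relative first cohomology: by cohomology-and-base-change, there is a coherent sheaf on $P$ whose fiber computes $H^1(X_s,L)$, and after stratifying $P$ into locally closed subschemes on which the rank of $H^1$ is locally constant, $R^1$ becomes locally free and its total space (minus the zero section) gives the extensions. Projectivizing fiberwise, I set $M=\P(R^1\pi_* \sL)$ over the stratum where $H^1\neq 0$, a projective bundle over a constructible subscheme of $P$; its $K$-points are exactly pairs $(L,[\text{class}])$ with $0\neq [\text{class}]\in \P(H^1(X_s,L))$, i.e. non-trivial extensions mod scale. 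This gives (1). For (2), each extension $(L_i\hookrightarrow V_i\twoheadrightarrow\sO_X)$ in $\mathbb{E}_X$ is defined over $k$ and non-trivial, hence is a $k$-point of $M$ once I discard finitely many indices to land inside the constructible piece where $R^1$ has the right rank.

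**The rational map $f$.** For (3) I would define $f$ via absolute Frobenius on the fibers. Over $S/\F_q$ the relative Frobenius induces a morphism on the Picard scheme, $L\mapsto F_{X_s}^*L$, and on cohomology the Frobenius-linear (or, after the usual $q$-twist, $\F_q$-linear) pullback map $H^1(X_s,L)\to H^1(X_s, F_{X_s}^*L)$. The pullback of an extension class is the image of its class under this map, so $f$ sends $(L,[c])$ to $(F_{X_s}^*L, [F^*c])$. This is a morphism of schemes wherever $F^*c\neq 0$, i.e. exactly where the pulled-back extension does not split, and undefined otherwise — which is precisely the rationality assertion. The only subtlety I need to check is that $F^*$ on cohomology is induced by an actual morphism of the relevant coherent sheaves over $P$ (so that $f$ is algebraic, not just defined pointwise); this follows because Frobenius pullback of the Poincaré bundle is again a family of line bundles, giving a map $P\to P$ covering which the cohomology map is $\sO_P$-semilinear, and projectivizing kills the semilinearity to yield an honest rational map of projective bundles.

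**Main obstacle.** The genuine difficulty is the interaction between the \emph{non-constancy of $\dim H^1(X_s,L)$} and the need for $M$ to be a single finite-type $S$-scheme carrying a well-defined rational $f$. Frobenius pullback can change $\dim H^1$, so the source and target strata of $P$ need not match up, and I must arrange the stratification and the choice of constructible piece carefully so that $f$ maps (a dense open of) $M$ into $M$ rather than off of it. I expect to handle this by noetherian/constructibility arguments: the locus where $H^1$ jumps is closed, so restricting to a suitable constructible $M$ and taking the closure of the image under the pointwise-defined $f$ produces a rational map with the stated domain of definition, and the finitely-many-exceptions clause in (2) absorbs the indices that fall into the bad (higher-$H^1$) loci. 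Verifying that $f$ is genuinely a rational map \emph{over} $S$ — i.e. commutes with the structure maps to $S$ through the relative Frobenius $S\to S$ raising to the $q$-power — is the bookkeeping step I would be most careful about.
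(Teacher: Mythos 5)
Your skeleton---a finite-type piece of the Picard scheme carrying the Poincar\'e bundle, $M$ equal to the projectivization of the relative $H^1$, and a rational self-map obtained by linearizing the Frobenius-semilinear map on cohomology via pullback along the absolute Frobenius of $M$ (the device $\sO_M(1)\mapsto\sO_M(p)$)---is the same as the paper's, and in the special case $\mathbb{L}=\mathbb{I}$, where the paper takes $M=\P(H^1(\sO_{X_S})^{\vee})$ after shrinking $S$, your outline essentially \emph{is} the paper's proof. The genuine gap is in the case of non-trivial $L_i$, exactly at the ``main obstacle'' you flag, where your proposed fixes would fail. Property (3) is a two-sided statement that must hold at \emph{every} point of $M$, not just at generic points: downstream, Hrushovski's theorem produces an arbitrary point of the domain of $f$, at which one must know that the value of $f$ is the Frobenius pullback, and $f$ must moreover be defined at the special points $e_i$ themselves. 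So the sheaf you projectivize has to compute $H^1(\sL_t)$ after \emph{arbitrary} base change $t\to T$. Grauert stratification gives this only on the constant-rank strata, while the points of $\mathbb{E}_X$ (infinitely $p$-divisible line bundles, hence very special) may all lie in the jumping loci; you cannot discard them, because (2) allows only finitely many exceptions. The paper circumvents this by never using $R^1p_{T*}\sL$: after removing finitely many elements so that $H^0(\sL_t)=0$ on the relevant locus $T$, it takes the relative Serre dual $\sE=R^{n-1}p_{T*}(\sL^{\vee}\otimes\omega)$, $n=\dim X$, which commutes with base change at every point of $T$ precisely because the next cohomology $H^n((\sL^{\vee}\otimes\omega)_t)=H^0(\sL_t)^{\vee}$ vanishes there; Serre duality identifies its fibers with $H^1(\sL_t)^{\vee}$, and $M=\P(\sE)$.

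The second missing ingredient is that ``defining $f$ pointwise and taking the closure of its graph'' is not a construction of a rational map (a rational map is a morphism on a dense open set, i.e.\ sheaf-level data), and no closure argument controls the domain as (3) requires: a graph closure can be defined at points where the pullback splits, with values unrelated to pullback, and conversely can miss non-generic points where the pullback does not split. The paper obtains the exact characterization by two devices you lack. For Frobenius stability of the parameter space (needed so that $f$ maps $M$ to $M$ at all), it uses a noetherian stabilization: the decreasing chain of closed subschemes $\sN_i=\{\,t\ :\ H^1(\sL_t^{p^j})\neq 0,\ 0\le j\le i\,\}$ stabilizes at some $\sN_{k_0}$, and this stable locus is closed, contains all but finitely many $L_i$ (since $L_i^{p^j}\cong L_{i-j}$), and is manifestly preserved by $p$-th powers---unlike rank-of-$H^1$ strata, which Frobenius does not preserve. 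For the domain, it constructs $f$ through the universal property of $\P(\sE)$ from a sheaf surjection $\Phi:\pi^*\nu^*\sE\to\pi^*\eta\otimes\sO_{M}(p)$, built via relative duality for the finite flat relative Frobenius $F:X_T\to X_T'$ (Lemma 3.6 of the paper); the domain of $f$ is then \emph{by construction} the surjectivity locus of $\Phi$, which Lemma 3.6 identifies fiberwise with non-splitting of the pulled-back extension. Without the stabilized Frobenius-stable locus and the base-change-friendly Serre-dual sheaf with its duality-theoretic Frobenius map, your construction does not yield an $f$ satisfying (3), and the application to Theorem 3.4 cannot run.
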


\begin{proof}
The line bundles $\{\,L_i\,\}_{i\in \mathbb{N}}$ occurring in
$\mathbb{E}_X$ must satisfy $$L_i\cong L_{i+1}^p,\quad
\forall\,\,i\in\mathbb{N},$$ so they are infinitely $p$-divisible,
thus lie in $\Pic^\tau(X)(k)$, the group of numerically trivial line
bundles over $k$ (\cite[9.6]{Kl}).

We first assume $\mathbb{L}=\mathbb{I}$. We shrink $S$ so that
$H^1(\sO_{X_S})$ is locally free and commutes with base change. Let
$H^1(\sO_{X_S})$ denote the first direct image of $\sO_{X_S}$ under
$X_S\to S$, then
$$\pi_S: M=\mathbb{P}(H^1(\sO_{X_S})^{\vee})\to S$$
is the moduli space of isomorphism classes of non-trivial extensions
$ \sO_{X_{\bar s}}\hookrightarrow V\twoheadrightarrow\sO_{X_{\bar
s}}$. In the diagram
$$\xymatrix{\ar@/^20pt/[rr]^{F_{X_S}} X_S \ar[r]^{F} \ar[dr] & X_S'\ar[r]\ar[d]^{}
& X_S\ar[d]\\
 & S\ar[r]^{F_S}& S,} $$
$F_S$, $F_{X_S}$ are the absolute Frobenius and $F:X_S\to X_S'$ is
the relative Frobenius. Then the "Frobenius pullback" induces
\ga{3.3}{F^*: F_S^*H^1(\sO_{X_S})=H^1(\sO_{X_S'})\to H^1(\sO_{X_S})}
which is $\sO_S$-linear and commutes with base change. Let
$$\pi_S^*H^1(\sO_{X_S})^{\vee}\to\sO_M(1)\to 0$$
be the universal rank one quotient, which, pulled back by the
absolute Frobenius $F_M:M\to M$, defines the $\sO_M$-linear map
\ga{3.4}{\pi_S^*F_S^*H^1(\sO_{X_S})^{\vee}\to \sO_M(p)\to 0} (since
$F_M^*\pi_S^*=\pi_S^*F_S^*$ and $F_M^*\sO_M(1):=\sO_M(p)$).
Combining the dual
$$\pi_S^*({F^*}^{\vee}): \pi_S^*H^1(\sO_{X_S})^{\vee}\to
\pi_S^*F_S^*H^1(\sO_{X_S})^{\vee}$$ of \eqref{3.3}
 with \eqref{3.4}
 defines the $\sO_M$-linear map
$$ \pi_S^*H^1(\sO_{X_S})^{\vee} \to
\sO_M(p). $$ By the universal property, this is the same as a
rational map
$$f: M\dashrightarrow M$$
over $S$.  To see that $f$ satisfies (3), let $e\to M$ be a
point with image $s\to S$ (under $M\xrightarrow{\pi_S}S$) , which
corresponds a nontrivial extension
$$e=(\sO_{X_s}\hookrightarrow V\twoheadrightarrow
\sO_{X_s})$$ where $X_s$ is the fiber of $X_S\to S$ at $s\to S$.
Then $f$ is well-defined at $e\to M$ if and only if the
$\sO_M$-linear map $$\pi_S^*H^1(\sO_{X_S})^{\vee} \to \sO_M(p)$$ at
$e\to M$ is surjective. By definition, this is equivalent to saying
that the $\sO_S$-linear map \eqref{3.3} at $s\to S$, which is
$$F^*: F_{k(s)}^*H^1(\sO_{X_s})=H^1(\sO_{X'_s})\to H^1(\sO_{X_s}),$$
is not trivial at $F^*_{k(s)}([e])\in H^1(\sO_{X_s'})$ (i.e.
$F^*(F^*_{k(s)}([e]))\neq 0$), where $$[e]\in H^1(\sO_{X_s})$$
denotes the corresponding class of $e=(\sO_{X_s}\hookrightarrow
V\twoheadrightarrow \sO_{X_s})$. Since
$$F_{X_s}^*: H^1(\sO_{X_s})\xrightarrow{F_{k(s)}^*}
H^1(\sO_{X_s'})\xrightarrow{F^*} H^1(\sO_{X_s}),$$ we have
$F^*_{X_s}([e])=F^*(F^*_{k(s)}([e]))$. Thus $f$ is well-defined at
$$e=(\sO_{X_s}\hookrightarrow V\twoheadrightarrow \sO_{X_s})\in M$$
if and only if $(F^*_{X_s}\sO_{X_s}\hookrightarrow
F^*_{X_s}V\twoheadrightarrow F^*_{X_s}\sO_{X_s})$ is not splitting,
and $$f(e)=(F^*_{X_s}\sO_{X_s}\hookrightarrow
F^*_{X_s}V\twoheadrightarrow F^*_{X_s}\sO_{X_s}).$$ The objects
$(\sO_X\hookrightarrow V_i\twoheadrightarrow\sO_X)\in \E_X$
($i\in\mathbb{N}$) define points $e_i\to M$ over $s=\Spec(k)\to S$,
and $f$ is well-defined at $e_i$ with $f(e_i)=e_{i-1}$ for $i>1$.
This finishes the proof if $\mathbb{L}=\mathbb{I}$.

If $\mathbb{L}\neq \mathbb{I}$, after removing a finite number of
elements in the set  $\mathbb{E}_X$, we can assume that all line
bundles $\{\,L_i\,\}_{i\in \mathbb{N}}$ occurring in $\mathbb{E}_X$
satisfy
$$ H^0(X,L_i)=0,\,\,H^1(L_i)\neq 0\quad i\in \mathbb{N}.$$
Let ${\rm Pic}^{\tau}_{X_S}\to S$ be the torsion component of the
identity of the Picard scheme \cite[9.6]{Kl}. Let $\sL$ be the
universal line bundle on $X_S\times_S {\rm Pic}^{\tau}_{X_S}$.  We
define
$$\mathcal{N}_0=\{\,\, t\in {\rm Pic}^{\tau}_{X_S}\,\,|\,\,
H^1(\sL_t)\neq 0\,\,\}$$ with its reduced structure. By
semi-continuity of cohomology, it is a closed sub-scheme of ${\rm
Pic}^{\tau}_{X_S}$. If $\sN_i$ is defined, let
$$\sN_{i+1}=\{\,\, t\in \sN_i\,\, |\,\,H^1(\sL^{p^{i+1}}_t)\neq 0\,\,\}.$$
By the noetherian property, $\mathcal{N}_0\supseteq
\mathcal{N}_1\supseteq\cdots\mathcal{N}_i\supseteq\mathcal{N}_{i+1}\supseteq\cdots$
terminates. Then,  there is a $k_0\ge 0$ such that
$$\mathcal{N}_i=\mathcal{N}_{k_0},\quad\, \forall\,\, i\ge k_0.$$
The line bundles $\{\,L_{i+p^{k^0}}\,\}_{i\in \mathbb{N}}$ occurring
in $\mathbb{E}_X(p^{k_0})$, where
$$\mathbb{E}_X(p^{k_0}):=\{\,\,(L_{i+p^{k_0}}\hookrightarrow
V_{i+p^{k_0}}\twoheadrightarrow\sO_X)\in \mathbb{E}_X\,\,\}_{i\in
\mathbb{N}},$$ are $k$-points of $\mathcal{N}_{k_0}=\mathcal{N}_i$
for all $i\ge k_0$.

We define $T\subset\mathcal{N}_{k_0}\to S$ with its reduced
structure to be the sub-scheme
$$T=\{\,\,t\in \mathcal{N}_{k_0}\,\,|\,\,\, H^0(\sL_t)=0\,\,\}.$$
It is open in $\mathcal{N}_{k_0}$. Let $\sL$ be the restriction of
universal line bundle on $X_S\times_S T$ (thus, for any $t\in T$,
$H^1(\sL^{p^i}_t)\neq 0$ for $i\ge 0$). Unfortunately,
$R^1{p_T}_*(\sL)$ is neither commuting with base change nor locally
free in general, where $p_T: X_S\times_S T\to T $ is the projection.
However,
$$\sE=R^{n-1}{p_T}_*(\sL^{\vee}\otimes p^*_{X_S}\omega_{X_S/S}),\quad n=\dim(X)$$
may not be locally free, but does commute with base change since
$$H^n(\sL_t^{\vee}\otimes
\omega_{X_S\times\{t\}})=H^0(\sL_t)^{\vee}=0,\quad \forall\,\,t\in
T.$$ There exists a quotient scheme $\pi:M=\mathbb{P}(\sE)\to T$
together with $$\pi^*\sE\to\sO_M(1)\to 0,$$ which represents the
functor that sends a $T$-scheme $p_W:W\to T$ to the set of
isomorphic classes of quotients $p_W^*\sE\to Q\to 0$, where $Q$ is a
line bundle on $W$ (\cite[Thm~2.2.4]{HL}).  By definition,
$M\xrightarrow{\pi}T\to S$ satisfies the properties (1) and (2) in
the proposition (replacing $M$ by $M_{{\rm red}}$ if necessary). The
rest of the proof is devoted to the construction of a rational map
$$f:M\dashrightarrow M$$
over $S$, which satisfies the property (3) in the proposition.

Let $F:X_S\times_S T\to (X_S\times_S T)'$ denote the relative
Frobenius morphism over $T$.
We write
\ga{3.5}{\xymatrix{\ar@/^20pt/[rr]^{F_{X_S\times_S T}} X_S\times_S
T\ar[r]^F \ar[dr]_{p_T} & (X_S\times_S T)'\ar[r]\ar[d]^{p'_T}
& X_S\times_S T\ar[d]^{p_T}\\
 & T\ar[r]^{F_T}& T.} }
One has $(X_S\times_S T)'=X'_S\times_S T$ thus we can project to
$X'_S$, further to $X_S$. Let $\sL'$ be the pullback of $\sL$ under
the projection
$$(X_S\times_S T)'\xrightarrow{p_1} X_S\times_S T$$ and $\omega$ (resp.
$\omega'$) be the relative canonical line bundle of $X_S\times_S
T\xrightarrow{p_T} T$ (resp. $(X_S\times_S T)'\xrightarrow{p'_T}
T$). The line bundle $F^*\sL'$ on $X_S\times_ST$ defines a
$S$-morphism $\nu:T\to{\rm Pic}^{\tau}_{X_S}$. Define
$$T^0=\{\,t\in
T\,|\,H^0(\sL_t^p)= 0\,\}\subset T$$ with its reduced scheme
structure. It is open in $T$.  By definition of $T$, $\nu$
restricted to $T^0$ factors through $T$, thus induces
$$\nu: T^0\to T$$ such that
$$(1\times \nu)^*\sL\otimes p_{T^0}^*\eta= F^*\sL'$$ on $X_S\times_S T^0$, with
$X_S\times_S T^0\xrightarrow{p_{T^0}} T^0,$ and where
 $\eta$ is a line bundle on
$T^0$.  We abuse notations and still write $\omega$ for the relative
dualizing sheaf $\omega_{X_S\times_S T^0/T^0}$ of $p_{T^0}$,
$\omega'$ for the relative dualizing sheaf $\omega_{X'_S\times_S
T^0/T^0}$ of $p'_{T^0`}: X'_S\times_S T^0\to T^0$ . Since
$\omega=p_{X_S}^*\omega_{X_S/S}=(1\times\nu)^*\omega_{X_S\times_S
T/T}$, we have
$$R^{n-1}{p_{T^0}}_*(\omega\otimes
{F^*\sL'}^{\vee})=\eta^{-1}\otimes\nu^*\sE,\quad
R^{n-1}{p'_{T^0}}_*(\omega'\otimes{\sL'}^{\vee})=F_{T^0}^*\sE, $$
and those identities commute with base change. By Lemma~\ref{lem3.6}, we have
$$\phi:\eta^{-1}\otimes\nu^*\sE\to
F_{T^0}^*\sE.$$ Together with the universal quotient
$\pi^*\sE\to\sO_{M_0}(1)\to 0$, restricted to $M_0:=M\times_S T^0$,
where $\pi:M_0\to T^0$, $T^0\to S$ factors through the composition
of the open embedding $T^0\subset T$ with the map $T\to S$, this
induces
$$ \Phi: \pi^*\nu^*\sE\xrightarrow{\phi}
\pi^*\eta\otimes\pi^*F_{T^0}^*\sE=\pi^*\eta\otimes
F_{M_0}^*\pi^*\sE\to \pi^*\eta\otimes \sO_{M_0}(p).$$  Here
$F_{T^0}$, $F_{M_0}$ are the absolute Frobenius morphisms satisfying
$$\begin{CD}
M_0&@>F_{M_0}>>& M_0 \\
@VV{\pi}V &&@VV{\pi}V\\
T^0&@>F_{T^0}>>&T^0
\end{CD}$$

Let $M^0\subset M$ be the reduced open set consisting of points
$q\in M$ such that $\Phi$ is surjective at $q\in M$ (which implies
$M^0\subset M_0$). Then there exists a unique morphism $$f: M^0\to
M$$ corresponding via the universal property to $f^*(\pi^*\sE\to
\sO_M(1)\to 0)=(\pi^*\nu^*\sE\to\pi^*\eta\otimes \sO_{M^0}(p)\to
0)$. By definition, one has the factorization
$$\begin{CD}
M^0&@>f>>& M \\
@VV{\pi}V &&@VV{\pi}V\\
T^0&@>\nu>>&T.
\end{CD}$$
Then Lemma~\ref{lem3.6} below implies that the rational map
$$f: M\dashrightarrow M$$ satisfies the requirement (3) in the
proposition.
\end{proof}

For any point $t\to T$, let $s\to S$ be its image under $T\to S$,
and $X_s$ be the fiber of $X_S\to S$ at $s\to S$. Then the diagram
\eqref{3.5} specializes to
$$\xymatrix{\ar@/^20pt/[rr]^{F_{X_s\times t}} X_s\times
t\ar[r]^{F_t} \ar[dr] & (X_s\times t)'\ar[r]\ar[d]^{}
& X_s\times t\ar[d]\\
 & t\ar[r]^{}& t} $$
and $F_t$ induces the $k(t)$-linear map $F^*_t: H^1(\sL'_t)\to
H^1((F^*\sL')_t)$, which induces a $k(t)$-linear map
$$(F^*_t)^{\vee}: H^1((F^*\sL')_t)^{\vee}\to H^1(\sL'_t)^{\vee}.$$
By Serre duality, we have the induced $k(t)$-linear map
\ga{3.6}{\phi_t: H^{n-1}((\omega\otimes {F^*\sL'}^{\vee})_t)\to
H^{n-1}((\omega'\otimes{\sL'}^{\vee})_t).} Serre duality allows to
make \eqref{3.6} in families:

\begin{lem}\label{lem3.6}There exists a homomorphism
$$\phi:R^{n-1}{p_T}_*(\omega\otimes {F^*\sL'}^{\vee})\to
R^{n-1}{p'_T}_*(\omega'\otimes{\sL'}^{\vee})$$ such that
$$\begin{CD}
R^{n-1}{p_T}_*(\omega\otimes {F^*\sL'}^{\vee})\otimes k(t)&@>\tau>>& H^{n-1}((\omega\otimes {F^*\sL'}^{\vee})_t) \\
@VV{\phi\otimes k(t)}V &&@VV{\phi_t}V\\
R^{n-1}{p'_T}_*(\omega'\otimes{\sL'}^{\vee})\otimes
k(t)&@>\tau>>&H^{n-1}((\omega'\otimes{\sL'}^{\vee})_t)
\end{CD}$$
is commutative for any $t\to T$, where $\tau$ is the canonical base
change isomorphism, and $\phi_t$ is the homomorphism in \eqref{3.6}.
\end{lem}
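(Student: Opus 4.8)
The plan is to recognize each fiberwise map $\phi_t$ of \eqref{3.6} as the Serre dual of the Frobenius pullback $F_t^*\colon H^1(\sL'_t)\to H^1((F^*\sL')_t)$, and to produce $\phi$ as a single morphism of sheaves on $T$ by carrying out this construction in families via relative Grothendieck--Serre duality. Concretely, I would first build the family version of $F_t^*$, then transpose it by relative duality, and finally read off the cohomology sheaf in degree $n-1$; compatibility of duality with base change then matches the result with the pointwise $\phi_t$.

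First I would construct the relative Frobenius pullback in families. Since $p_T=p'_T\circ F$ in \eqref{3.5}, the unit of adjunction $\sL'\to RF_*F^*\sL'$ yields, after applying $R{p'_T}_*$, a morphism
\[
u\colon R{p'_T}_*\sL'\longrightarrow R{p_T}_*F^*\sL'
\]
in the derived category of $\sO_T$-modules; taking $\mc H^1$ recovers the family map $R^1{p'_T}_*\sL'\to R^1{p_T}_*F^*\sL'$, whose fiber at $t\to T$ is $F_t^*$ by compatibility of the adjunction unit with the base change $\Spec k(t)\to T$ (flat base change together with proper base change).

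Next I would dualize $u$. As $X_S\times_S T\to T$ and $(X_S\times_S T)'\to T$ are smooth projective of relative dimension $n$, their relative dualizing complexes are $\omega[n]$ and $\omega'[n]$ with $\omega,\omega'$ line bundles, and relative Grothendieck--Serre duality gives functorial isomorphisms
\[
R{p_T}_*(\omega\otimes{F^*\sL'}^{\vee})[n]\;\cong\;R\Hom_{\sO_T}(R{p_T}_*F^*\sL',\sO_T)
\]
(the derived $\sO_T$-dual; here I use that $F^*\sL'$ is a line bundle, so its relative dual is $\omega\otimes{F^*\sL'}^{\vee}[n]$), and likewise on the primed side with $\sL'$. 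Applying $R\Hom_{\sO_T}(-,\sO_T)$ to $u$ and transporting through these two isomorphisms produces a morphism $R{p_T}_*(\omega\otimes{F^*\sL'}^{\vee})[n]\to R{p'_T}_*(\omega'\otimes{\sL'}^{\vee})[n]$; I then define $\phi$ to be its cohomology sheaf in degree $-1$, which is exactly the induced map on $R^{n-1}$.

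Finally I would verify the base change square. The two $R^{n-1}$ sheaves commute with base change: by cohomology and base change it suffices that the adjacent top cohomologies vanish on fibers, i.e. $H^n((\omega\otimes{F^*\sL'}^{\vee})_t)=H^0((F^*\sL')_t)^{\vee}=0$ and $H^n((\omega'\otimes{\sL'}^{\vee})_t)=H^0(\sL'_t)^{\vee}=0$; these hold because $H^0(\sL_t^p)=0$ on $T^0$ and $H^0(\sL_t)=0$ on $T$ by the very definitions of $T$ and $T^0$ (note $(F^*\sL')_t\cong\sL_t^p$). Since relative duality and its trace are themselves compatible with the base change $t\to T$, the specialization $\phi\otimes k(t)$ is, through the base change isomorphisms $\tau$, precisely the Serre dual $(F_t^*)^{\vee}$, which is $\phi_t$ by \eqref{3.6}; this gives the asserted commutativity. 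The main obstacle is exactly that the intermediate sheaves $R^1{p_T}_*F^*\sL'$ and $R^1{p'_T}_*\sL'$ are in general neither locally free nor compatible with base change, so one cannot dualize them naively fiber by fiber. The point of staying in the derived category and passing to $\mc H^{-1}$ only at the end is that the well-behaved objects are the Serre-dual $R^{n-1}$ sheaves, and the vanishing $H^0((F^*\sL')_t)=0$ --- the reason the open locus $T^0$ is introduced --- is what makes the degree $n-1$ truncation and its base change behave.
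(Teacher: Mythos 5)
Your proposal is correct in substance, but it takes a genuinely different route from the paper's proof. You dualize \emph{over the base}: you push forward the unit $\sL'\to F_*F^*\sL'$ to get $u\colon R{p'_T}_*\sL'\to R{p_T}_*F^*\sL'$, then apply $R\Hom_{\sO_T}(-,\sO_T)$ and relative Grothendieck--Serre duality for the smooth projective morphisms $p_T,p'_T$, and finally extract $\mc H^{-1}$. The paper instead dualizes \emph{upstairs, along the Frobenius itself}: duality for the finite flat (Cohen--Macaulay) morphism $F$ gives the sheaf-level isomorphism $F_*\sH om(F^*\sL',\omega)\cong \sH om(F_*F^*\sL',\omega')$ of \eqref{3.8}, which converts the unit injection into the surjection of vector bundles \eqref{3.9} on $X'_T$; the map $\phi$ is then just $R^{n-1}{p'_T}_*$ applied to \eqref{3.9}, so commutativity with the base-change maps $\tau$ is pure functoriality, and the fibers are identified with $\phi_t$ by classical Serre duality on each fiber. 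The trade-off is real: the paper's route needs no derived categories, no perfectness of $R{p_T}_*$, and no compatibility of relative duality with base change, and it yields commutativity with the canonical base-change \emph{morphisms} for every $t\to T$; your route is conceptually cleaner ($\phi$ is literally the relative Serre dual of the family Frobenius pullback) but outsources the hardest technical point --- that the duality isomorphism and its trace commute with the base change $t\to T$ --- to a substantial black box (this is essentially the content of Conrad's book, which sits unused in the paper's bibliography), and it implicitly needs $R{p_T}_*F^*\sL'$ and $R{p'_T}_*\sL'$ to be perfect complexes to dualize over a possibly singular $T$. Note also that, as you frame it, the verification of the square is carried out where the horizontal maps $\tau$ are isomorphisms, i.e.\ on $T^0$ for the left column (where $H^0(\sL_t^p)=0$); that is exactly what Proposition~\ref{prop3.5} uses, but extending to all $t\to T$, as the paper's functoriality argument does automatically, would require you to spell out the naturality of the base-change edge maps on $\mc H^{n-1}$ rather than invoke them as isomorphisms.
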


\begin{proof}  Let
$\omega$ (resp. $\omega'$) be the relative canonical line bundle of
$$X_T:=X_S\times_S T\xrightarrow{p_T}T$$ (resp. $X_T':=(X_S\times_S
T)'\xrightarrow{p'_T}T$).  Let $F:X_T\to X_T'$ be the relative
Frobenius morphism in \eqref{3.5}, which is flat and Cohen-Macaulay
as $X_S\to S$ is smooth. The injection $\sL'\hookrightarrow
F_*F^*\sL'$ of vector bundles induces the surjection of vector
bundles \ga{3.7} {\sH om(F_*F^*\sL',\omega')\twoheadrightarrow \sH
om(\sL',\omega').} Duality theory for the  Cohen-Macaulay map
$F:X_T\to X_T'$ implies an isomorphism \ga{3.8}{F_*\sH
om(F^*\sL',\omega)\cong \sH om(F_*F^*\sL',\omega').} Equations
\eqref{3.7} and \eqref{3.8} imply that one has a surjection \ga{3.9}
{F_*\sH om(F^*\sL',\omega)\twoheadrightarrow \sH om(\sL',\omega')}
of vector bundles on $X'_S$. Since $F$ is a finite morphism, taking
$R^{n-1}{p'_T}_*(\eqref{3.9})$ induces
\ga{3.10}{\phi:R^{n-1}{p_T}_*\sH om(F^*\sL',\omega)\to
R^{n-1}{p'_T}_*\sH om(\sL',\omega').} For any $t\to T$, let $X_t$ be
the fiber of $X_T\to T$, and $F_t:X_t\to X_t'$ be the relative
Frobenius. Then $\eqref{3.10}\otimes k(t)$ induces (through $\tau$)
$$\phi\otimes k(t):H^{n-1}(X_t,\sH om(F^*_t\sL'_t,\omega_t))\to
H^{n-1}(X'_t,\sH om(\sL'_t,\omega'_t))$$ which is induced by
$\varphi:=\eqref{3.9}\otimes k(t)$.  Then the surjection of vector
bundles on $X'_t$ \ga{3.11}{\varphi:(F_t)_*\sH
om(F_t^*\sL'_t,\omega_t)\twoheadrightarrow \sH om(\sL_t',\omega_t')}
is dual, by taking $\sH om(\cdot,\,\omega'_t)$, to the canonical
injection \ga{3.12} {\sL_t'\hookrightarrow (F_t)_*F^*_t\sL'_t\,} of
vector bundles on $X'_t$. As the dual $H^{n-1}(\varphi)^{\vee}$ of
$H^{n-1}(\varphi)=\phi\otimes k(t)$ is the $k(t)$-linear map
$$F_t^*: H^1(X'_t,\sL'_t)\to H^1(X_t, F^*_t\sL_t')$$
induced by \eqref{3.12}, this finishes the proof.
\end{proof}

To prove Theorem~\ref{thm3.4}, the key tool is a theorem of
Hrushovski. In fact, we only need a special case of his theorem
\cite[Corollary~1.2]{H}.

\begin{thm}[Hrushovski, { \cite[Corollary~1.2]{H}}]\label{thm3.7}
Let $Y$ be an affine variety over $\mathbb{F}_q$, and let
$\Gamma\subset (Y\times_{\F_q} Y)\otimes_{\F_q} \bar \F_q$ be an
irreducible subvariety over $\overline{\mathbb{F}}_q$. Assume the
two projections $\Gamma\to X$ are dominant. Then, for any closed
subvariety $W\varsubsetneq Y$, there exists $x\in
Y(\overline{\mathbb{F}}_q)$ such that $(x, x^{q^m})\in \Gamma$ and
$x\notin W$ for large enough natural number $m$.
\end{thm}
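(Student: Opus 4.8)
The plan is to prove the statement by a \emph{twisted point count}: estimate the number of $x\in Y(\bar\F_q)$ with $(x,x^{q^m})\in\Gamma$ and show this number grows strictly faster than the number of such $x$ lying in $W$, so that for all large $m$ a solution outside $W$ must exist.

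First I would reduce to a convenient situation. Although $\Gamma$ is only assumed defined over $\bar\F_q$, it is defined over some finite extension $\F_{q^k}$; replacing $q$ by $q^k$ (which only restricts $m$ to a fixed arithmetic progression, and this is enough for the application to Theorem~\ref{thm3.4}) I may assume $Y$, $W$ and $\Gamma$ are all defined over $\F_q$ with $\Gamma$ geometrically irreducible. Write $n=\dim Y$ and $d=\dim\Gamma$; since both projections $p_1,p_2\colon\Gamma\to Y$ are dominant one has $d\ge n$. I would then reduce to the \emph{equidimensional} case $d=n$ by Bertini: cutting $\Gamma$ by $d-n$ general hyperplane sections (defined over a finite extension, which I again absorb into $q$) produces a geometrically irreducible $\Gamma_0\subset Y\times Y$ of dimension $n$ whose two projections are still dominant — each fibre of $p_i$, of dimension $d-n$, survives the cutting as a nonempty finite set — and with $p_1(\Gamma_0)$ dense, hence not contained in $W$. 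A solution for $\Gamma_0$ is a solution for $\Gamma$, so it suffices to treat $d=n$.

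For $d=n$ the set $S_m=\{x\in Y(\bar\F_q):(x,x^{q^m})\in\Gamma\}$ is the intersection $\Gamma\cap\Phi_m$, where $\Phi_m$ is the graph of the $q^m$-power Frobenius, and I would count it by the Grothendieck--Lefschetz trace formula for the correspondence $\Gamma$ twisted by a high power of Frobenius. Being $n$-dimensional with both projections generically finite, $\Gamma$ defines a cohomological correspondence $u$ acting on the $\ell$-adic cohomology with compact support $H^\ast_c(Y_{\bar\F_q},\Q_\ell)$, and the Deligne conjecture (Fujiwara's theorem) gives, for $m\gg0$,
$$|S_m|=\sum_{i=0}^{2n}(-1)^i\,\Tr\!\big(\phi^m\circ u \mid H^i_c(Y_{\bar\F_q},\Q_\ell)\big),$$
the naive local terms agreeing with the genuine fixed points precisely because the high Frobenius power makes the correspondence contracting. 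Since $Y$ is irreducible, $H^{2n}_c\cong\Q_\ell(-n)$, on which $\phi^m$ acts by $q^{mn}$ and $u$ by $\deg(p_i)>0$; by Deligne's weight bounds (Weil~II) all eigenvalues of $\phi^m$ on $H^i_c$ have absolute value $\le q^{mi/2}$, so every other term is $O(q^{m(n-\frac12)})$. Therefore $|S_m|=c\,q^{mn}+O(q^{m(n-\frac12)})$ with $c=\deg(p_i)>0$; this is exactly where the dominance of \emph{both} projections enters, guaranteeing that the leading coefficient does not vanish.

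Finally I would bound the solutions lying in $W$. Applying the same analysis to $\Gamma_W=\Gamma\cap(W\times Y)$, whose first projection has image of dimension $\le n-1$, the top nonvanishing compactly-supported cohomology sits in degree $\le 2(n-1)$, so $|S_m\cap W|=O(q^{m(n-1)})$. Hence $|S_m|-|S_m\cap W|=c\,q^{mn}+O(q^{m(n-\frac12)})>0$ for all large $m$, producing the desired $x\in S_m\setminus W$; undoing the descent gives the statement for $m$ in the fixed arithmetic progression. The main obstacle is the estimate of the preceding paragraph: making the twisted Lefschetz trace formula rigorous on the \emph{affine}, possibly singular, $Y$ — controlling compact supports, the equality of naive and true local terms for $m\gg0$, and the weight bounds isolating the leading term — is the entire technical heart. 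Hrushovski instead circumvents this through the model theory of difference fields (the theory $\mathrm{ACFA}$) together with a difference-scheme Lang--Weil estimate; the geometric route sketched here is the alternative I would pursue.
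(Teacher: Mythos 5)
The paper does not actually prove Theorem~\ref{thm3.7}: it is imported verbatim from Hrushovski \cite[Corollary~1.2]{H}, whose own proof is model-theoretic (the theory ACFA of difference fields plus a twisted Lang--Weil estimate), as you correctly note at the end. Your proposal is therefore a genuinely different, cohomological route, and it is the known geometric alternative --- essentially the program later carried out rigorously by Varshavsky (``Intersection of a correspondence with a graph of Frobenius''). Its skeleton is sound: descending $\Gamma$ to some $\F_{q^k}$ costs only an arithmetic progression of exponents $m$, which indeed suffices for the application in Theorem~\ref{thm3.4}, where a single large $m$ is used; the Bertini reduction to $\dim\Gamma=\dim Y=n$ is exactly what makes $\Gamma$ act as a degree-preserving correspondence on cohomology; $H^{2n}_c(Y_{\bar\F_q},\Q_\ell)\cong \Q_\ell(-n)$ gives the main term; and since after descent $u$ commutes with Frobenius, simultaneous triangularization plus the Weil~II weight bounds gives $|\Tr(\phi^m\circ u\mid H^i_c)|=O(q^{mi/2})$ with constants independent of $m$, isolating the leading term $c\,q^{mn}$, $c\ge 1$ the generic degree of the projection along which one pushes forward.

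The genuine gap is the central appeal to Fujiwara's theorem, whose hypotheses are not satisfied and not verified. Deligne's conjecture as proved by Fujiwara (and generalized by Varshavsky in 2007) concerns correspondences $(c_1,c_2)\colon C\to X\times X$ in which one structural map is \emph{proper} (in the Pink--Fujiwara setting $c_1$ proper, originally with $c_2$ quasi-finite); properness is needed both to make the correspondence act on $H^{*}_c$ at all and to prevent twisted fixed points from escaping to infinity. For an arbitrary irreducible closed $\Gamma\subset Y\times Y$ with $Y$ affine, neither projection is proper (a proper map to an affine variety is finite), so the theorem does not apply as stated; passing to closures $\bar\Gamma\subset \bar Y\times\bar Y$ over a compactification, one must show that twisted fixed points of $\bar\Gamma$ lying on the boundary do not occur or do not contribute, and a high Frobenius power alone does not ensure this, since Frobenius preserves the boundary --- even the finiteness of the fixed-point set needs this control. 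You flag this honestly as ``the entire technical heart'', but that means your text reduces Hrushovski's theorem to another deep statement it does not prove; as a replacement for the paper's citation it is a plausible program rather than a proof. Two smaller repairs: the bound on $|S_m\cap W|$ should be run on the self-correspondence $\Gamma\cap(W\times W)$ of $W$ --- legitimate because after descent $W$ is defined over the ground field, so $x\in W$ forces $x^{q^m}\in W$ --- since $\Gamma\cap(W\times Y)$, as written, is a correspondence from $W$ to $Y$ to which no self-trace formula applies; and you should record that the naive local terms are local intersection multiplicities of $\Gamma$ with the graph of $\phi^m$, hence positive integers, which is what converts a positive total trace into an actual point of $S_m$ and lets the trace over $W$ dominate $|S_m\cap W|$.
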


\begin{rmk} \label{rmk3.8}
 Writing $Y\subset \mathbb{A}^r$, then
 $x=(x_1,...,x_r)\in (\overline{\mathbb{F}}_q)^{r}=\mathbb{A}^r(
\overline{\mathbb{F}}_q)$
 and $x^{q^m}:=(x_1^{q^m},...,x_r^{q^m})\in  \mathbb{A}^r(
\overline{\mathbb{F}}_q) $. In our application,
 $\Gamma$ is the Zariski closure of the graph of a dominant rational map
 $f:Y\dashrightarrow Y$ and $W$ is the locus where $f$ is not
 well-defined.
\end{rmk}
\begin{proof}[Proof of Theorem~\ref{thm3.4}]
 (Compare with  \cite[Thm~3.14]{EM}.) Let
 $$f: M \dashrightarrow M$$ be as in  Proposition~\ref{prop3.5}. So $M\to S$ is a $S$-reduced scheme of
finite type, where $S$ is smooth affine over $\F_q$, the
normalization of $\F_p$ in $H^0(S, \sO_S)$.
 Let $M_k$ be the general fiber of $M\to S$
at $\Spec(k)\to S$.  We define  $Z_k$ to be the intersection of the
Zariski closures $\overline{\E_X(m)} $ of the $\E_X(m)\subset M_k$,
where $\mathbb{E}_X(m)=\{\,(L_{i+m}\hookrightarrow
V_{i+m}\twoheadrightarrow \sO_X)\in
\mathbb{E}_X\,\}_{i\in\mathbb{N}}$. By the noetherian property,
there exists a $m_0>0$ such that
$$Z_k=\overline{\mathbb{E}_X(m)}\subset  M_k(k)=M(k)$$ for any $m\ge
m_0$. As $\E_X$ is assumed to be infinite,  all components of $Z_k$
have dimension $\ge 1$. Indeed, if it had a component of dimension
$0$, then there would be a $m_1>m_0$ such that this $0$-dimensional
component would not lie on $\E_X(m_1)$, a contradiction. The fields
of constants of the irreducible components of $Z_k$ lies between
$\F_q(S)$ and $k$. Let $S'\to S$ be affine with $S'$ smooth affine
irreducible such that all components of $Z_k$ are defined over $S'$,
yielding models of them over $S'$ which are geometrically
irreducible over $\F_q$, and a model $Z\to S'$ of $Z_k$. We replace
now $X_S\to S$ and $M\to S$ by $X_{S'}$ and the corresponding
$M_{S'}\to S'$. We abuse notations, set $S=S'$, and have $Z\subset
M\to S$.

 Let $M^0\subset M$ be the largest open subset where $f$ is
well-defined. Notice that $$\mathbb{E}_X(1)\subset M^0_k$$ and
$f(\mathbb{E}_X(m))=\mathbb{E}_X(m-1)$ for any $m\ge 1$. Thus
$f(Z_k\cap M^0_k)$ contains a dense subset of $Z_k$. On the other
hand, $$f(Z_k\cap M^0_k)\subset Z_k,$$ thus $f:M\dashrightarrow M$
induces a dominant rational map $f: Z_k\dashrightarrow Z_k, $ and
thus a rational dominant map $$f: Z\dashrightarrow Z.$$ We consider
one irreducible component of $Z_k$, and its model $Z^{\rm irr}\to
S$, which is a geometrically irreducible component of $Z$ over
$\F_q$.  There is an integer $a_1>0$ such that $f^{a_1}: Z^{\rm
irr}\dashrightarrow Z^{\rm irr}$ is a dominant rational map over a
finite field $\mathbb{F}_q$.

To prove that $Z^{\rm irr}$ contains a periodic point of $f^{a_1}$,
without loss of generality, we can assume that $Y:=Z^{\rm
irr}\subset \mathbb{A}^r$ is an affine variety and $f^{a_1}:
Y\dashrightarrow Y$ is defined by rational functions $f_1, ..., f_r
\in \mathbb{F}_q(Y)$. Let
$$ \Gamma\subset Y\times_{\F_q} Y$$
be the Zariski closure of the graph of $f^{a_1}$, which is
irreducible over $\overline{\mathbb{F}}_q$  as $Y$ is by assumption.
The two projections $\Gamma\to Y$ are dominant since
$f^{a_1}:Y\dashrightarrow Y$ is dominant. By Theorem~\ref{thm3.7},
 there exists a point
$x\in Y(\overline{\mathbb{F}}_q)$ such that $f$ is well-defined at
$x=(x_1,...,x_r)$ and
$f^{a_1}(x)=(x_1^{q^m},...,x_r^{q^m}):=x^{q^m}$ for some large $m$.
Recall that $f_i\in \mathbb{F}_q(Y)$ ($i=1,..., r$) have
coefficients in $\mathbb{F}_q$, we have
$$\aligned f^{a_1}(f^{a_1}(x))&=(f_1(x_1^{q^m},...,x_r^{q^m}),...\,,f_1(x_1^{q^m},...,x_r^{q^m}))\\&=
(f_1(x_1,...,x_r)^{q^m},...\,,f_1(x_1,...,x_r)^{q^m})\\&=
(x_1^{q^{2m}},...\,,x_r^{q^{2m}})=x^{q^{2m}}.\endaligned$$ Thus
$f^{a_2a_1}(x)=x^{q^{a_2m}}=x$ when $a_2$ is large enough. The point
$$x\in Y(\overline{\mathbb{F}}_q)$$ determines a nontrivial extension
$0\to L\to V\to \sO_{X_{\bar s}}\to 0$ on a good reduction $X_{\bar
s}$ of $X$ (over $\overline{\mathbb{F}}_q$). That $f^a(x)=x$,
$a=a_1a_2$, means by Proposition~\ref{prop3.5} (3)
$$\left((F^*_{X_{\bar s}})^aL\hookrightarrow (F^*_{X_{\bar s}})^aV\twoheadrightarrow (F^*_{X_{\bar s}})^*\sO_{X_{\bar
s}}\right)\cong (L\hookrightarrow V\twoheadrightarrow\sO_{X_{\bar
s}})$$ up to scale. This finishes the proof of Theorem~\ref{thm3.4}
and thus of Theorem~\ref{thm3.1}.
\end{proof}

\begin{thm}  \label{thm3.9} Let $X$ be a smooth projective variety over
 an algebraically closed field $k$ of characteristic $p>0$,
then:
\begin{itemize}
\item[(i)] Every stratified bundle on $X$ is trivial if and only if $\pi_1$ is
trivial.
\item[(ii)] All the irreducible stratified bundles have rank $1$
if and only if $[\pi_1,\pi_1]$ is a pro-$p$-group.
\item [(iii)] Every stratified bundle is a direct sum of stratified
line bundles, that is $\mathbf{str}(X)$ is a semi-simple category with
irreducible objects of rank $1$,  if and only if $\pi_1$ is abelian with no
non-trivial
$p$-power quotient.
\end{itemize}
\end{thm}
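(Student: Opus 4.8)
The plan is to assemble all three equivalences from the results already in place, handling each statement separately; no new geometric input is needed beyond Theorems~\ref{thm2.6} and~\ref{thm3.1}. The three forward implications are exactly Gieseker's Theorem~\ref{thm2.2}: (i) gives that triviality of every stratified bundle forces $\pi_1$ trivial, and (iii) gives that semi-simplicity with rank~$1$ irreducibles forces $\pi_1$ abelian with no non-trivial $p$-power quotient. For the converse to (i) I would invoke the main theorem of \cite{EM}, which asserts that a trivial $\pi_1$ implies every stratified bundle is trivial. Statement (ii) is precisely Theorem~\ref{thm2.6}, so there is nothing further to do there.

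The content of the theorem therefore reduces to the converse of (iii): assuming $\pi_1$ abelian with no non-trivial $p$-power quotient, I must show that $\mathbf{str}(X)$ is a semi-simple category whose irreducible objects all have rank~$1$. First I would note that an abelian $\pi_1$ has trivial commutator, and the trivial group is \emph{a fortiori} a pro-$p$-group; Theorem~\ref{thm2.6} then gives at once that every irreducible stratified bundle has rank~$1$. Second, Theorem~\ref{thm3.1}, whose hypotheses are exactly the standing ones, supplies the vanishing $\mathrm{Ext}^1(\mathbb{L}',\mathbb{L})=0$ for all rank~$1$ objects $\mathbb{L},\mathbb{L}'$ of $\mathbf{str}(X)$.

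It remains to deduce full semi-simplicity from this $\mathrm{Ext}^1$-vanishing between simple objects. Here I would use that the equivalence \eqref{2.1} realizes $\mathbf{str}(X)$ as the representation category of the Tannaka group $\pi^{{\rm str}}$, so that every object has finite length, bounded by its rank. I would then argue by induction on the length: given a short exact sequence $0\to A\to E\to S\to 0$ with $S$ simple and $A$ already decomposed as a direct sum of rank~$1$ objects $\mathbb{L}_1,\dots,\mathbb{L}_m$, additivity of $\mathrm{Ext}^1$ in the second argument gives $\mathrm{Ext}^1(S,A)=\bigoplus_{i}\mathrm{Ext}^1(S,\mathbb{L}_i)$, and each summand vanishes by Theorem~\ref{thm3.1} since $S$ and every $\mathbb{L}_i$ are of rank~$1$. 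Hence the sequence splits and $E$ is again a direct sum of rank~$1$ objects, completing the induction and hence the proof that $\mathbf{str}(X)$ is semi-simple with rank~$1$ irreducibles. The only point requiring care is this homological bookkeeping together with the finiteness of length; the substantive inputs, namely the rank bound of Theorem~\ref{thm2.6} and the splitting of Theorem~\ref{thm3.1}, are already available, so I expect no genuine obstacle here.
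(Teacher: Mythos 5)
Your proposal is correct and follows essentially the same route as the paper: the forward implications are Gieseker's Theorem~\ref{thm2.2}, the converse to (i) is the main theorem of \cite{EM}, (ii) is Theorem~\ref{thm2.6}, and the converse to (iii) combines Theorem~\ref{thm2.6} (irreducibles have rank $1$, hence a finite filtration with rank~$1$ quotients) with Theorem~\ref{thm3.1} to split that filtration. The only difference is that you spell out the inductive $\mathrm{Ext}^1$-additivity argument that the paper compresses into the single sentence ``by Theorem~\ref{thm3.1}, the filtration splits,'' which is a useful but not substantively different elaboration.
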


\begin{proof} (i) is the main theorem of \cite{EM}. (ii) is
Theorem~\ref{thm2.6}.
To show (iii), assume that $\pi_1$ is
abelian with no $p$-power order quotient. Let
$E=(E_n,\sigma_n)_{n\in \Bbb N}$ be a stratified bundle on $X$. By
(ii), any irreducible stratified bundle has rank $1$. Thus there is
a filtration $0=E^0\subset E^1\subset \ldots \subset E^r=E$ in
$\mathbf{str}(X)$ such that $L^v=E^v/E^{v-1}$, $1\le v\le r$, are
rank one stratified bundles. Then, by Theorem~\ref{thm3.1}, the
filtration splits and $E$ is a direct sum of rank $1$ objects.
\end{proof}

We now   comment on analogs of (ii) and (iii) in complex geometry.
For this reason, we include here the following lemma, which may have
an independent interest.

\begin{lem}\label{lem3.10} Let $G$ be a commutative group scheme over an
algebraically closed field $k$ such that all its quotients in $\G_m$
are smooth. Let $\ell $ be a non-trivial character of $G$. Then $
H^1(G, \ell)=0$.
\end{lem}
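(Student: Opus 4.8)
The plan is to compute $H^1(G,\ell)$ through the Hochschild cochain complex of the affine group scheme $G$ with coefficients in the one–dimensional comodule $k_\ell$. Write $A=k[G]$ for the (cocommutative, since $G$ is commutative) Hopf algebra of $G$, and let $\ell\in A$ denote as well the grouplike element attached to the character. The degree-one part of the complex then identifies $H^1(G,\ell)=Z^1/B^1$, where the $1$-cocycles are the $\ell$-twisted primitives
$$Z^1=\{\,a\in A:\ \Delta a=\ell\otimes a+a\otimes 1\,\},$$
i.e.\ the crossed homomorphisms $a(gh)=\ell(g)a(h)+a(g)$, and the $1$-coboundaries are $B^1=k\cdot(\ell-1)$. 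A direct check gives $\Delta(\ell-1)=\ell\otimes\ell-1\otimes 1=\ell\otimes(\ell-1)+(\ell-1)\otimes 1$, so $\ell-1\in Z^1$; and since $\ell$ is a \emph{nontrivial} character, $\ell-1\neq 0$, so $B^1$ is a line. Hence the whole statement reduces to proving $\dim_k Z^1\le 1$.

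The key input, and the only place the hypothesis enters, is the production of a rational point $g_0\in G(k)$ with $\ell(g_0)\neq 1$. Factor $\ell$ as $G\twoheadrightarrow H\hookrightarrow\G_m$ with $H=\im(\ell)$ its scheme-theoretic image; this $H$ is a quotient of $G$ lying in $\G_m$, hence smooth by assumption, and nontrivial since $\ell$ is. Over the algebraically closed field $k$ a nontrivial smooth subgroup scheme of $\G_m$ has $H(k)$ equal to $k^\times$ or to the group of $n$-th roots of unity with $p\nmid n$; in either case $H(k)\setminus\{1\}\neq\varnothing$. After replacing $G$ by a finite-type quotient through which $\ell$ factors, faithful flatness of $G\to H$ together with the Nullstellensatz lets me lift a chosen element of $H(k)\setminus\{1\}$ to $g_0\in G(k)$, so $\ell(g_0)\neq 1$. \emph{This is exactly the step that fails without smoothness}: if $H$ were allowed to be $\mu_{p^r}$ then $H(k)=\{1\}$, no such $g_0$ exists, and indeed $H^1$ need not vanish.

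With $g_0$ in hand I contract $Z^1$. Cocommutativity of $A$ (that is, commutativity of $G$) applied to the cocycle relation $\Delta a=\ell\otimes a+a\otimes 1$ gives $\ell\otimes a+a\otimes 1=a\otimes\ell+1\otimes a$, i.e.\ $(\ell-1)\otimes a=a\otimes(\ell-1)$ in $A\otimes A$. Applying the algebra homomorphism $\mathrm{ev}_{g_0}\otimes\mathrm{id}$ (pullback along left translation by $g_0$) yields the identity $(\ell(g_0)-1)\,a=\mathrm{ev}_{g_0}(a)\,(\ell-1)$ in $A$. Since $\ell(g_0)-1\neq 0$ in $k$, this forces $a=\tfrac{\mathrm{ev}_{g_0}(a)}{\ell(g_0)-1}\,(\ell-1)\in B^1$. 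Therefore $Z^1=B^1$ and $H^1(G,\ell)=0$.

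The only real obstacle is the second paragraph: pinning down precisely how smoothness of the quotient $\im(\ell)$ furnishes a genuine $k$-point on which $\ell$ is nontrivial, and making sure the reduction to a finite-type (rather than merely pro-algebraic) $G$ is legitimate so that $G(k)\to H(k)$ is surjective. Everything else — the description of $Z^1$ and $B^1$, and the one-line contraction using cocommutativity — is formal.
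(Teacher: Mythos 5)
Your proof is correct and is essentially the paper's own argument, transcribed into Hopf-algebra/Hochschild-complex language: your cocommutativity identity $(\ell-1)\otimes a=a\otimes(\ell-1)$ is the paper's relation $\chi(h)\sigma(g)+\sigma(h)=\chi(g)\sigma(h)+\sigma(g)$ obtained from $\sigma(gh)=\sigma(hg)$, and your contraction $a=\tfrac{\mathrm{ev}_{g_0}(a)}{\ell(g_0)-1}\,(\ell-1)$ is exactly the paper's coboundary $\sigma(g)=\chi(g)v-v$ with $v=(\chi(h)-1)^{-1}\sigma(h)$. The only point where you go beyond the paper is in spelling out how smoothness of $\im(\ell)$ plus the Nullstellensatz produces the $k$-point $g_0$ with $\ell(g_0)\neq 1$ (including the reduction to a finite-type quotient), a step the paper simply asserts.
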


\begin{proof} Let $\chi:G\to \G_m={\rm Aut}(\ell)$ be the non-trivial character,
and let $\sigma: G\to \ell$ be a cocycle representing a class in
$H^1( G  ,\ell)$. By definition of a cocycle,  one has
$\sigma(gh)=\chi(g) \sigma(h)+\sigma(g)$. The commutativity of $G$
implies
\ga{3.?}{\sigma(hg)=\chi(h)\sigma(g)+\sigma(h)=\sigma(gh)=\chi(g)
\sigma(h)+\sigma(g).} As $\chi$ is non-trivial, and ${\rm Im}
(\chi)\subset \G_m$ is assumed to be smooth, there is a $h\in G(k)$
such that $$0\neq (\chi(h)-1)\in {\rm End}(\ell)=k.$$ Thus
$$(\chi(h)-1)\in k^\times=\G_m(k)={\rm Aut}(\ell).$$ Set
$v=(\chi(h)-1)^{-1}\sigma(h)\in \ell$. Then, by \eqref{3.2}, one has
$\sigma(g)=\chi(g)v-v$, which means that $\sigma$ is a coboundary.
Thus
 $ H^1(G,\ell)=0$.
\end{proof}

\begin{rmk} \label{rmk3.11}
 The same proof shows that if $X$ is a smooth complex variety, with abelian
topological fundamental group, and if $\ell$ is a non-trivial rank
$1$ local system, then $H^1(X, \ell)=0$. Indeed, $G$ is now
$\pi_1^{{\rm top}}(X,a)$, the topological fundamental group based at
a complex point $a\in X(\C)$, and non-triviality implies the
existence of $h\in G$ with $(\chi(h)-1)\in \C^\times={\rm
Aut}(\ell)$. One then concludes identically.

This fact ought to be well known, but we could not find a reference
in the literature.
\end{rmk}

\begin{rmk} \label{rmk3.12}
As already mentioned in the introduction, the Malcev-Grothendieck
theorem  (\cite{Mal}, \cite{Gr}) asserts that the \'etale
fundamental group $\pi_1$ of a smooth complex projective variety is
trivial if and only its stratified  fundamental group $\pi^{{\rm
str}}$ is trivial, where $\pi^{\rm str}$ is the pro-affine complex
algebraic group of the Zariski closures of the monodromies of
complex linear representations of the topological fundamental group
$\pi_1^{\rm top}$. By going to the associated Galois cover, it
implies that $\pi^{{\rm str}}$ is finite if and only if $\pi_1$ is
finite (in which case $\pi^{{\rm str}}=\pi_1$). One also has that
$\pi^{{\rm str}}$ is abelian if and only if $\pi_1$ is abelian.
Indeed,  by definition $\pi^{{\rm str}}$ is abelian if and only
irreducible complex linear representations have dimension 1. But if
$\rho: \pi_1^{{\rm top}}\to GL(r, \C)$ is a representation, since
$\pi_1^{{\rm top}}$ is spanned by finitely many elements, $\rho$ has
values in $GL(r, A)$ for $A$ a ring of finite type over $\Z$. Then
if $\rho$ is irreducible, there is a closed point $s\in \Spec (A)$
such that $\rho\otimes \kappa(s): \pi_1^{\rm top}\to GL(r,
\kappa(s))$ is irreducible as well, where $\kappa(s)$ is the residue
field of $s$. As $\rho\otimes \kappa(s)$ is finite, it factors
through $\pi_1$, thus $r=1$.  Since a  linear representation with
finite monodromy is semi-simple, one can summarize as follows:
\begin{claim} \label{claim3.13}
 $\pi_1$ is
\begin{itemize}
\item[1)]  finite,
\item[2)] resp. abelian,
\item[3)] resp. abelian and
finite  \end{itemize}
if and only if $\pi_1^{{\rm str}}$
\begin{itemize}
\item[1)] finite
\item[2)] resp. abelian,
\item[3)] resp. abelian and
finite \end{itemize}
if and only if
\begin{itemize}
\item[1)]   local
systems have finite monodromy,
\item[2)]resp. irreducible local systems have rank $1$,
\item[3)] resp. local systems are direct sums of rank $1$
local systems. \end{itemize}
\end{claim}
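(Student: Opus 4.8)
The plan is to prove each of the three rows by chaining two dictionaries: a Tannakian one relating the local--systems column to the $\pi^{{\rm str}}$ column, and the Malcev--Grothendieck input (with the reduction-mod-$p$ argument already given in this remark) relating the $\pi^{{\rm str}}$ column to the $\pi_1$ column. Write $\Gamma=\pi_1^{{\rm top}}(X,a)$, so that $\pi_1=\hat\Gamma$ is its profinite completion and $\pi^{{\rm str}}$ is its proalgebraic completion over $\C$. By construction $\Rep_\C(\pi^{{\rm str}})$ is the category of complex local systems, matching dimension with rank and matching irreducibility and direct sums on the two sides, the monodromy of a local system being Zariski dense in the image of $\pi^{{\rm str}}$; moreover the finite-monodromy local systems are exactly the continuous representations of the profinite group $\pi_1=\hat\Gamma$.

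I would treat the abelian row first. If $\pi^{{\rm str}}$ is abelian, Schur's lemma makes every irreducible local system rank $1$. If every irreducible local system has rank $1$, then in particular every irreducible continuous representation of $\pi_1=\hat\Gamma$ has rank $1$, so every finite quotient of $\Gamma$ is abelian and $\pi_1$ is abelian. Finally, if $\pi_1$ is abelian, the reduction argument of the remark --- spreading an irreducible representation out to $GL(r,A)$ with $A$ of finite type over $\Z$, then specializing to a closed point with finite residue field where it acquires finite image and hence factors through $\pi_1$ --- forces $r=1$ for every irreducible local system, which is the Tannakian criterion for $\pi^{{\rm str}}$ to be abelian. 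This closes the cycle among the three abelian conditions.

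Next the finite row. Two implications are straightforward: if $\pi^{{\rm str}}$ is finite then every local system factors through a finite group and so has finite monodromy; and if $\pi_1$ is finite then the Galois cover $X'\to X$ of group $\pi_1$ has trivial \'etale fundamental group, so by Malcev--Grothendieck every local system on $X'$ is trivial, whence every local system on $X$ has monodromy through $\Gal(X'/X)=\pi_1$, again finite, and $\pi^{{\rm str}}=\pi_1$ is finite. Thus (A1) implies both (B1) and (C1), and (B1) implies (C1). The delicate point, which I expect to be the main obstacle, is the reverse passage from (C1): the dictionary only identifies ``all local systems have finite monodromy'' with ``$\pi^{{\rm str}}$ is pro-finite'', i.e. with $\pi^{{\rm str}}=\hat\Gamma=\pi_1$, and upgrading this pro-finiteness to genuine finiteness of $\pi_1$ is exactly where the projectivity of $X$ and Malcev--Grothendieck must be brought to bear.

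For the last row I would argue the $\pi^{{\rm str}}$--local systems equivalence directly, as it is cleanest. If $\pi_1$ is finite abelian then, by the first two rows, every local system has finite monodromy, hence is semisimple (Maschke, in characteristic zero), with rank-$1$ irreducible constituents since $\pi^{{\rm str}}$ is abelian; so every local system is a direct sum of rank-$1$ local systems. Conversely, if every local system is such a direct sum, then all representations of $\pi^{{\rm str}}$ are semisimple with $1$-dimensional irreducible constituents, so $\pi^{{\rm str}}$ is pro-diagonalizable with character group $\Hom(\Gamma,\C^\times)=\Hom(H_1(X,\Z),\C^\times)$. If $b_1(X)>0$ there is a surjection $\Gamma\twoheadrightarrow\Z$, and a single Jordan block $\Z\to GL_2(\C)$ then yields a non-semisimple local system, a contradiction; hence $b_1(X)=0$, the character group is finite, and $\pi^{{\rm str}}$ is finite abelian, so by the finite and abelian rows $\pi_1$ is finite abelian. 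The only extra ingredient here, beyond the first two rows, is the semisimplicity of finite-monodromy representations.
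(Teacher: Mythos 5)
Your architecture is the same as the paper's (the content of Remark~3.12): the Tannakian dictionary between local systems and representations of $\pi^{\rm str}$, Malcev--Grothendieck applied to the Galois cover for the finite row, the spread-out-and-specialize argument for the abelian row, and semisimplicity of finite-monodromy representations for the third row. Your abelian row and third row are complete; indeed your converse in the third row (pro-diagonalizability of $\pi^{\rm str}$ plus the Jordan-block argument forcing $b_1(X)=0$, hence a finite character group) is more explicit than what the paper writes, and it has the virtue of not depending on the problematic direction of the first row. One small shared glitch: ``all irreducibles have rank $1$ $\Rightarrow$ $\pi^{\rm str}$ abelian'' is not a purely Tannakian criterion (a pro-unipotent group has only trivial irreducibles), exactly as the paper's ``by definition'' is too glib; the step is nonetheless correct for $\pi^{\rm str}$ of a finitely generated group, since by Malcev every linear image of $\Gamma$ is residually finite, and its finite quotients are quotients of $\hat\Gamma$, hence abelian, so the image itself is abelian.

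The gap you flag in the first row is genuine, and you are right that the dictionary only yields that $\pi^{\rm str}$ is profinite, equal to $\hat\Gamma=\pi_1$. But your stated plan --- that ``projectivity and Malcev--Grothendieck must be brought to bear'' to upgrade profiniteness to finiteness --- cannot be executed as such: Malcev--Grothendieck requires a space with trivial \'etale fundamental group, and if $\pi_1$ is infinite while every local system has finite monodromy, no finite cover supplies one. The implication is non-formal: for abstract finitely generated groups it is false (the Grigorchuk group is infinite and residually finite, yet every finite-dimensional complex representation of it has finite image, a finitely generated torsion linear group being finite by Jordan--Schur), so any proof would have to use projectivity in an essential way. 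You should be aware that the paper itself never closes this link either: Remark~3.12 proves only ``$\pi_1$ finite $\Leftrightarrow$ $\pi^{\rm str}$ finite'' and the trivial implication from there to finite monodromy, and then ``summarizes''; the direction from finite monodromy of all local systems back to finiteness of $\pi_1$ is asserted, not proved. So your proposal reproduces the paper's proof exactly where that proof exists, and the one equivalence you honestly leave open is the one the paper leaves open as well.
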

While the main result of \cite{EM} is an analog in characteristic
$p>0$ of Claim~\ref{claim3.13} 1), the main theorem of this note is
an analog  of Claim~\ref{claim3.13} 2) and 3).

\end{rmk}

\bibliographystyle{plain}

\begin{thebibliography}{99}

\bibitem{C} B. Conrad: {\em Grothendieck Duality and Base Change},
Lecture Notes in Mathematics {\bf 1750}, 2000.

\bibitem{dS} J.P.P. dos Santos: {\em Fundamental group schemes for stratified
sheaves}, Journal of Algebra, {\bf 317}, (2007), 691-713.

\bibitem{EM} H. Esnault and V. Mehta: {\em Simply connected projective manifolds
in characteristic $p>0$
have no nontrivial stratified bundles}, Invent. math. {\bf 181} (2010),
449--465.

\bibitem{Gi} D. Gieseker: {\em  Flat vector bundles and the fundamental group in
non-zero characteristics}, Ann. Sc. Norm. Super. Pisa, {\bf 4}
Ser. 2 (1), (1975), 1-31.

\bibitem{Gr} A. Grothendieck: {\em Repr\'esentations lin\'eaires et
compactifications profinies des groupes discrets}, Manuscripta mathematica, {\bf
2} (1970), 375--396.

\bibitem{H} E. Hrushovski: {\em The Elementary Theory of the Frobenius
Automorphism},
arXiv: math/0406514v1 [math.LO], 2004.

\bibitem{HL} D. Huybrechts, M. Lehn: {\em The Geometry of the Moduli Space of
Sheaves}, Aspects of Mathematics {\bf E31}  Vieweg Verlag, Braunschweig 1997.

\bibitem{Kl} S. Kleiman: {\em The Picard Scheme}, Fundamental Algebraic
Geometry, in Mathematical Surveys and Monographs, vol {\bf 123} (2005),
235--300.

\bibitem{LS} H. Lange and U. Stuhler: {\em Vektorb{\"u}ndel auf
Kurven und Darstellungen der algebraischen Fundamentalgruppe}, Math.
Z., {\bf 156}, (1977), 73-83.

\bibitem{Mal} A. Malcev: {\em On isomorphic matrix representations of
infinite groups}, Rec. Math. [Mat. Sbornik] N.S.  {\bf 8} (50)  (1940),
405--422.
\bibitem{Mu} D. Mumford: {\em Abelian Varieties}, Oxford University Press, 1970.

\bibitem{Serre} J.-P. Serre: {\em Linear Representations of Finite Groups},
Graduate Texts in Mathematics {\bf 42}, 1977.

\bibitem{SGA1} SGA1.: {\em Rev\^etements \'etales et groupe fondamental}, SGA 1.

%\bibitem{WA} B. L. Van der Waerden: Algebra II, Springer-Verlag, 1959.

\end{thebibliography}

\renewcommand\refname{References}

\end{document}